\newcommand{\bel}[1]{\begin{equation}\label{#1}}
\newcommand{\be}{\begin{equation}}
\newcommand{\ba}{\begin{eqnarray}}
\newcommand{\ea}{\end{eqnarray}}
\newcommand{\qe}{\end{equation}}
\newcommand{\Hmm}[1]{\leavevmode{\marginpar{\tiny%
$\hbox to 0mm{\hspace*{-0.5mm}$\leftarrow$\hss}%
\vcenter{\vrule depth 0.1mm height 0.1mm width \the\marginparwidth}%
\hbox to
0mm{\hss$\rightarrow$\hspace*{-0.5mm}}$\\\relax\raggedright #1}}}
\newtheorem{theorem}{Theorem}[section]
\newtheorem{definition}[theorem]{Definition}
\newtheorem{remark}[theorem]{Remark}
\newtheorem{proposition}[theorem]{Proposition}
\begin{document}

\title[A version of Bakry-\'{E}mery Ricci flow on a finite graph]{A version of Bakry-\'{E}mery Ricci flow on a finite graph}

\author{Bobo Hua}
\address{Bobo Hua: School of Mathematical Sciences, LMNS, Fudan University, Shanghai 200433, China; Shanghai Center for Mathematical Sciences, Fudan University, Shanghai 200433, China}
\email{\href{mailto:bobohua@fudan.edu.cn}{bobohua@fudan.edu.cn}}

\author{Yong Lin}
\address{Yong Lin: Yau Mathematical Science Center, Tsinghua University, Beijing 100084, China}
\email{\href{mailto:yonglin@tsinghua.edu.cn}{yonglin@tsinghua.edu.cn}}

\author{Tao Wang}
\address{Tao Wang: School of Mathematical Sciences, Fudan University, Shanghai 200433, China}
\email{\href{mailto:taowang21@m.fudan.edu.cn}{taowang21@m.fudan.edu.cn}}

\maketitle
\bigskip

\begin{abstract}
In this paper, we study the Bakry-\'{E}mery Ricci flow on finite graphs. Our main result is the local existence and uniqueness of solutions to the Ricci flow. We prove the long-time convergence or finite-time blow up for the Bakry-\'{E}mery Ricci flow on finite trees and circles.
\end{abstract}

\section{Introduction}

An important problem in Riemannian geometry is to study the Ricci curvature of manifolds. The Ricci curvature of a manifold reflects the topological, geometric, and analytical properties of the manifold. In 1982, Richard Hamilton \cite{Hamilton1982} introduced the Ricci flow, which is a process that deforms the metric of a Riemannian manifold in a way formally analogous to the heat diffusion. The Ricci flow is a powerful tool for studying manifolds. The Ricci flow leads to the proofs of the Poincar\'{e} conjecture, Thurston's geometrization conjecture and the differentiable sphere theorem. We refer to \cite{perelman2002entropy, BS2009, CZ2006} and their references for more details.

In recent years, many mathematicians are interested in the analysis on graphs. Analogous to the continuous case, various Ricci curvatures were introduced on graphs. One is Ollivier's coarse Ricci curvature. In 2009, Ollivier \cite{Ollivier2009} used random walks to define a coarse Ricci curvature on a general metric space, and proved that on Riemannian manifolds, the coarse Ricci curvature coincides with the Ricci curvature in small scale. In particular, Ollivier's coarse Ricci curvature can be defined on graphs. In \cite{LLY2011}, the authors modified the definition of Ollivier's coarse Ricci curvature and defined the curvature that we now call Lin-Lu-Yau Ricci curvature. This curvature can be characterized by the Laplacian on the graph. We refer to \cite{MW2019}. Another curvature notion on the graph is the Bakry-\'{E}mery curvature. The definition of Bakry-\'{E}mery curvature comes from the Bochner formula in geometry, see Section 2 for a brief introduction.

Motivated by the continuous Ricci flow, there are some definitions of Ricci flow on graphs. Ni et al introduced a coarse Ricci flow on complex networks \cite{NLLG2019}. Lin-Lu-Yau Ricci flow was introduced by Bai et al. on finite graphs \cite{bai2021ollivier}. Moreover, they proved the long-time existence of the flow up to some surgeries. In \cite{cushing2022bakryemery, cushing2022bakryemery1}, the authors introduced a curvature on weighted graphs which is based on the Bakry-\'{E}mery curvature, and proved that the flow preserves the Markovian property and its limits as time goes to infinity turn out to be curvature sharp weighted graphs.

In this paper, we introduce a Ricci flow based on the Bakry-\'{E}mery Ricci curvature on finite graphs. Let $(V, E, w, m)$ be a finite, undirected, simple graph with the set of vertices $V$, the set of edges $E$, the edge weight $w: E \to \mathbb{R}_+$ and the vertex weight $m: V \to \mathbb{R}_+$. We fix the edge weight $w$ and vary the vertex weight $m$. Noting that the minimal eigenvalue of a symmetric matrix is locally Lipschitz continuous w.r.t. the matrix entries, we can prove the following local existence and uniqueness theorem of the Bakry-\'{E}mery Ricci flow.
\begin{theorem}
 For a finite graph $(V, E, w)$, and any $m_0: V \to \mathbb{R}_+$, there exist $T >0$ and a unique solution $m \in C^{\infty}([0, T) \times V, \mathbb{R}_+)$ to the Bakry-\'{E}mery Ricci flow
 \begin{equation*}
 \begin{cases}
  \partial_t m(t, x) = -\mathrm{Ric}_{n, m(t, \cdot)}(x), \quad \forall \ x \in V, \\
  m(0, x) = m_0(x), \quad \forall \ x \in V.
 \end{cases}
\end{equation*} 
where $\mathrm{Ric}_{n, m(t, \cdot)}(x)$ is the Bakry-\'{E}mery Ricci curvature defined in Definition \ref{BERicciCurvature}
\end{theorem}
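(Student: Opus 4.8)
The plan is to exploit the finiteness of $V$ to turn the flow into an autonomous system of ordinary differential equations on a Euclidean space and then invoke the Cauchy--Lipschitz (Picard--Lindel\"of) theory. Writing $N = |V|$, a vertex weight $m(t,\cdot)$ is a curve in the open positive cone $\mathbb{R}^N_{+} = (0,\infty)^N$, and the flow reads $\dot m = F(m)$ with $F(m) = -(\mathrm{Ric}_{n,m}(x))_{x\in V}$. By Definition \ref{BERicciCurvature}, at each vertex $x$ the curvature $\mathrm{Ric}_{n,m}(x)$ is the minimal eigenvalue of a symmetric curvature matrix $Q_x(m)$ whose entries are rational in the weights with denominators built from the values $m(y)$; since $m(y) > 0$ on $\mathbb{R}^N_{+}$, these entries are $C^\infty$ functions of $m$ there. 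First I would record this smooth (entrywise) dependence of $Q_x(\cdot)$ on $m$.

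Next I would combine entrywise smoothness with the quoted fact that $A \mapsto \lambda_{\min}(A)$ is locally Lipschitz in the matrix entries: the composition $m \mapsto \mathrm{Ric}_{n,m}(x)$ is then locally Lipschitz on $\mathbb{R}^N_{+}$, hence so is $F$. Fixing $m_0$ and a closed ball $\overline{B}(m_0,r) \subset \mathbb{R}^N_{+}$ on which $F$ is bounded by some $M$ and Lipschitz with constant $L$, the Picard--Lindel\"of theorem supplies a time $T = T(r,M) > 0$ and a unique solution $m \in C^1([0,T),\mathbb{R}^N_{+})$ with $m(0) = m_0$ that remains in the ball; the uniqueness clause of Picard--Lindel\"of, applied through the Lipschitz bound $L$, yields the uniqueness asserted in the theorem. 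This already settles local existence and uniqueness in the $C^1$ category.

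The main obstacle is the upgrade from $C^1$ to $C^\infty$, because $\lambda_{\min}$ is only Lipschitz and fails to be differentiable exactly where the minimal eigenvalue is not simple. My plan is to use analytic perturbation theory: wherever the minimal eigenvalue of $Q_x(m)$ is simple, it depends real-analytically (in particular $C^\infty$) on the entries of $Q_x(m)$, so $F$ is $C^\infty$ in a neighbourhood of such an $m$; the standard smooth-dependence and bootstrap theorem for ODEs with $C^\infty$ right-hand side then promotes the $C^1$ solution to $C^\infty$ in $t$ (smoothness in $x$ being vacuous on the finite set $V$). Shrinking $T$ if needed so that the minimal eigenvalue of each $Q_x(m(t))$ remains simple along the trajectory --- which persists on a possibly smaller initial interval as soon as it holds at $t = 0$ --- gives $m \in C^\infty([0,T)\times V,\mathbb{R}_{+})$. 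The genuinely delicate point, and where extra care is required, is an initial datum at which the minimal eigenvalue is degenerate: here one must either follow the relevant analytic eigenvalue branch along the curve $t \mapsto Q_x(m(t))$ via the one-parameter Rellich theorem, or restrict attention to data for which simplicity holds at $t = 0$.
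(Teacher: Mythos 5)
Your core argument is exactly the paper's own proof: the paper observes that $\mathrm{Ric}_{n,m}(x)=\lambda_{\min}(A_{n,m}(x))$, that the matrix entries depend smoothly (in fact rationally) on $m$ in the open cone $\mathbb{R}_+^V$, hence that the right-hand side of the flow is locally Lipschitz, and then invokes the Picard theorem --- nothing more. Where you diverge is in treating the upgrade from $C^1$ to $C^\infty$ as a separate and genuinely problematic step, and here your instincts are better than the paper's: Picard--Lindel\"of with a merely Lipschitz vector field yields only a $C^1$ (indeed $C^{1,1}$) solution, and $\lambda_{\min}$ fails to be differentiable precisely where the minimal eigenvalue is degenerate, so the $C^\infty$ assertion in the statement does not follow from the paper's three-line argument. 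Your proposed repair (analytic perturbation theory along the trajectory where the minimal eigenvalue is simple, Rellich's one-parameter theorem or a restriction on the initial data otherwise) is the natural one, but note that the degenerate case you flag remains unresolved in your write-up as well --- restricting to data with simple minimal eigenvalue proves a weaker statement than the one claimed, and Rellich's theorem parametrizes analytic eigenvalue \emph{branches}, whereas the flow is driven by their pointwise minimum, which can switch branches and lose smoothness at crossing times. So the honest conclusion is that your proposal reproduces the paper's proof of local existence and uniqueness and, beyond that, correctly identifies a regularity gap that the paper itself never addresses; neither argument, as written, establishes the full $C^\infty([0,T)\times V,\mathbb{R}_+)$ claim for arbitrary initial data.
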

We can also define the normalized Bakry-\'{E}mery Ricci flow 
\begin{align*}
\begin{cases}
 \partial_t m(t, x) = -\mathrm{Ric}_{n, m(t, \cdot)}(x) + \frac{1}{|V|}\sum_{y \in V}\mathrm{Ric}_{n, m(t, \cdot)}(y), \quad \forall \ x \in V, \\
 m(0, x) = m_0(x), \quad \forall \ x\in V,
\end{cases}
\end{align*}
which preserves the volume of the graph, and prove the existence and uniqueness of the normalized Ricci flow.

As examples, we prove that the Bakry-\'{E}mery Ricci flow on finite trees $T_3$, circle $C_3$ and $C_4$ blows up in finite time, and the flow on $C_k$ with $k \geq 5$ converges to a constant.

This paper is organized as follows. In Section 2, we recall the setting of weighted graphs and introduce the Bakry-\'{E}mery Ricci flow. Moreover, we will prove the main result. In Section 3, we discuss the Ricci flow on finite trees $T_3$ and circle $C_k$. 

\section{The Bakry-\'{E}mery Ricci flow}
\subsection{Weighted graphs}
In this subsection, we recall the setting of weighted graphs. Let $(V, E)$ be a finite, simple, undirected graph. Two vertices $x, y$ are called neighbors, denoted by $x \sim y$, if there is an edge connecting $x$ and $y$, i.e., $\{x, y\} \in E$. A graph is called connected if for any $x, y \in V$ there is a path $\{z_i\}_{i=0}^n \subseteq V$ connecting $x$ and $y$, i.e.,
\[
 x = z_0 \sim z_1 \sim \cdots \sim z_n = y.
\]
In this paper, we always consider connected graphs. Let
\[
 w: E \to \mathbb{R}_+, \quad \{x, y\} \mapsto w(x, y) = w(y, x),
\]
be an edge weight function, and
\[
 m: V \to \mathbb{R}_+, \quad x \mapsto m(x),
\]
be a vertex function.We write
\[
 m(V):= \sum_{x \in V}m(x)
\]
for the volume of the graph. We call the quadruple $G = (V, E, w, m)$ a \textit{weighted graph} with the edge weight $w$ and the vertex weight $m$. For a weighted graph $G$ and any function $f: V \to \mathbb{R}$, the Laplace operator $\Delta$ is defined as 
\[
 \Delta f(x):= \sum_{y \in V: y \sim x} \frac{w(x, y)}{m(x)}(f(y)-f(x)), \quad \forall \ x \in V.
\]

\subsection{Bakry-\'{E}mery curvature}
The Bochner formula is a fundamental tool to study the analysis of a manifold with Ricci curvature bound. Let $K \in \mathbb{R}$ and $n \in (0, \infty]$. As is well-known, for a Riemannian manifold $M$, the Ricci curvature is bounded below by $K$ and the dimension is bounded above by $n$, if and only if
\begin{equation}\label{BochnerFormula}
 \frac12\Delta_{M}|\nabla f|^2 \geq \frac{1}{n}(\Delta f)^2 + \langle \nabla f, \nabla \Delta f \rangle + K|\nabla f|^2, \quad \forall \ f \in C_c^{\infty}(M),
\end{equation}
where $\Delta_M$ is the Laplace-Beltrami operator on $M$ and $\nabla \cdot$ is the gradient of a function. For a general Markov semigroup, Bakry and \'{E}mery \cite{Bakry1987, BakryEmery1985, BGL2014} introduced the $\Gamma$-calculus, and defined the curvature dimension condition mimicking \eqref{BochnerFormula}, denoted by $CD(K, n)$. For weighted graphs, this condition is called the Bakry-\'{E}mery curvature condition, introduced by \cite{Elworthy1991, LinYau2010, Sch1999} independently.

Now we study the $\Gamma$-calculus on graphs, see \cite{Sch1999, LinYau2010, BGL2014}. For a weighted graph $G=(V, E, w, m)$, we introduce the gradient form (called Carr\'{e} du champ operator) as, for $f, g: V \to \mathbb{R}$,
\begin{equation}\label{GammaOperator}
 \Gamma(f, g):= \frac12\left(\Delta(fg)-f\Delta g- g\Delta f\right), \quad \Gamma(f):= \Gamma(f, f).
\end{equation}
For a weighted graph $G$, the term $|\nabla f|^2$ in \eqref{BochnerFormula} is understood as $\Gamma(f)$. The iterated gradient form is defined as 
\[
 \Gamma_2(f):= \frac12\Delta\Gamma(f) - \Gamma(\Delta f, f).
\]
\begin{definition}\label{BERicciCurvature}(\cite{Sch1999, LinYau2010})
 Let $K \in \mathbb{R}$, $n \in (0, \infty]$. For a vertex $x$, we say a weighted graph $G = (V, E, w, m)$ satisfies $CD(K, n, x)$ if 
 \[
  \Gamma_2(f)(x) \geq \frac{1}{n}(\Delta f)^2(x) + K\Gamma(f)(x), \quad \forall \ f: V \to \mathbb{R}.
 \]
 For any $x \in V$, $n \in (0, \infty]$, we denote by $\mathrm{Ric}_{n, w, m}(x)$ the maximal $K$ such that the above inequality holds for all $f$. If $CD(K, n, x)$ holds for all $x \in V$, we write $CD(K, n)$.
\end{definition}
\begin{remark}
 \begin{itemize}
  \item[(1)] The curvature condition $CD(K, n, x)$ is a local condition, which is determined by the graph structure in $B_2(x)$.
  \item[(2)] This is called linear curvature condition, since it is equivalent to the smallest eigenvalue problem for a quadratic form, see \cite{CKLP2022}. $\mathrm{Ric}_{n, w, m}(x)$ can be calculated by using linear programming \cite{CKLLS2022}.
 \end{itemize}
\end{remark}

\subsection{Bakry-\'{E}mery Ricci Flow} 
From now on, we fix the edge weight $w$, and omit the dependence on $w$. We call $(V, E, w)$ the reference graph. For a reference graph, we write $\mathrm{Ric}_{n, m}(x):= \mathrm{Ric}_{n, w, m}(x)$. If $n=\infty$, we write $\mathrm{Ric}_{m}(x):= \mathrm{Ric}_{\infty, m}(x)$.

We introduce the Bakry-\'{E}mery Ricci flow on a reference graph $(V, E, w)$. Let $m(t, \cdot)$ be a time-dependent vertex weight on $(V, E)$, i.e., $m: [0, T] \times V \to \mathbb{R}_{+}$ for $T > 0$. For $n \in (0, \infty]$, the Bakry-\'{E}mery Ricci flow of dimension $n$ is defined as 
\begin{equation}\label{RicciFlow}
 \begin{cases}
  \partial_t m(t, x) = -\mathrm{Ric}_{n, m(t, \cdot)}(x), \quad \forall \ x \in V, t \in [0,T], \\
  m(0, x) = m_0(x), \quad \forall \ x \in V,
 \end{cases}
\end{equation}
where $m_0$ is an initial vertex weight, and $\mathrm{Ric}_{n, m(t, \cdot)}$ is the Bakry-\'{E}mery curvature w.r.t. the vertex weight $m(t, \cdot)$.

By a reformation of the Bakry-\'{E}mery curvature, $\mathrm{Ric}_{n, m}(x)$ is the minimal eigenvalue of the curvature matrix $A_{n,m}(x)$ at $x$, see \cite{CKLP2022}. Moreover, we note that the element of $A_{n, m}(x)$ is a smooth function w.r.t. the vertex weight $m$. The following property is well-known, see Tao's book on random matrices \cite{Tao2012}.

\begin{proposition}
 For the symmetric matrix $A = (a_{ij})$, denote by $\lambda_{\min}(A)$ the minimal eigenvalue of $A$. Then $\lambda_{\min}(A)$ is locally Lipschitz continuous w.r.t $a_{ij}$.
\end{proposition}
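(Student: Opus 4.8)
The plan is to exploit the variational (Courant--Fischer) characterization of the minimal eigenvalue,
\[
 \lambda_{\min}(A) = \min_{\|v\| = 1} \langle Av, v\rangle = \min_{\|v\|=1} v^{\top} A v,
\]
which expresses $\lambda_{\min}$ as the pointwise infimum over the unit sphere of the family of functions $A \mapsto v^{\top} A v$, each of which is \emph{linear} (hence Lipschitz with a uniform constant) in the entries of $A$. The infimum of a family of functions sharing a common Lipschitz constant is again Lipschitz with that constant, and this is the mechanism I would use.

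Concretely, given two symmetric matrices $A$ and $B$, I would choose a unit vector $v$ attaining the minimum for $B$, so that $\lambda_{\min}(B) = v^{\top} B v$. Using $v$ as a competitor in the Rayleigh quotient for $A$ gives
\[
 \lambda_{\min}(A) \le v^{\top} A v = v^{\top} B v + v^{\top}(A-B)v = \lambda_{\min}(B) + v^{\top}(A-B)v \le \lambda_{\min}(B) + \|A - B\|_{\mathrm{op}},
\]
where $\|\cdot\|_{\mathrm{op}}$ denotes the operator (spectral) norm. Interchanging the roles of $A$ and $B$ yields the two-sided bound
\[
 |\lambda_{\min}(A) - \lambda_{\min}(B)| \le \|A - B\|_{\mathrm{op}}.
\]
Finally I would pass from the operator norm to the entries via a standard comparison such as $\|A-B\|_{\mathrm{op}} \le \|A-B\|_{F} = \bigl(\sum_{i,j}(a_{ij}-b_{ij})^2\bigr)^{1/2}$, so that $\lambda_{\min}$ is in fact globally $1$-Lipschitz in the operator norm and Lipschitz in the entrywise norm, which is strictly stronger than the asserted local Lipschitz property.

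There is no deep obstacle here; the argument is short and self-contained. The only conceptual subtlety worth flagging is that individual eigenvalues need \emph{not} be smooth — indeed not even differentiable — at points where the spectrum has multiplicity or where eigenvalues cross. The variational formula sidesteps this entirely: even though each branch may be nonsmooth, the \emph{minimum} over the unit sphere inherits the uniform Lipschitz constant of the linear family, so no regularity of the eigenprojections is required. For the application to the Ricci flow, combining this proposition with the fact that the entries of the curvature matrix $A_{n,m}(x)$ depend smoothly on the vertex weight $m$ shows that $m \mapsto \mathrm{Ric}_{n,m}(x) = \lambda_{\min}(A_{n,m}(x))$ is locally Lipschitz, which is the input needed for the local existence and uniqueness theorem via the Picard--Lindel\"of theorem.
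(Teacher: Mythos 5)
Your proof is correct. The paper does not actually prove this proposition---it only cites Tao's book on random matrices---and your Courant--Fischer argument is precisely the standard proof of the eigenvalue-stability (Weyl) inequality that such a citation rests on, in fact yielding the stronger conclusion that $\lambda_{\min}$ is globally $1$-Lipschitz in the operator norm, hence locally Lipschitz in the entries $a_{ij}$, as required for the Picard argument in Theorem 2.4.
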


We prove the local existence and uniqueness of the Bakry-\'{E}mery Ricci flow.

\begin{theorem}\label{MainTheorem}
 For a finite reference graph $(V, E, w)$, and any $m_0: V \to \mathbb{R}_+$, there exist $T > 0$ and a unique solution $m \in C^{\infty}([0, T) \times V, \mathbb{R}_+)$ to the Bakry-\'{E}mery Ricci flow \eqref{RicciFlow}.
\end{theorem}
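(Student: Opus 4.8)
The plan is to recast \eqref{RicciFlow} as an autonomous system of ordinary differential equations on the finite-dimensional open set $U := \{m : V \to \mathbb{R}_+\} \cong (0, \infty)^{|V|} \subseteq \mathbb{R}^{|V|}$ and then invoke the Picard--Lindel\"of theorem. Identifying a vertex weight with the vector $m = (m(x))_{x \in V}$, the right-hand side of the flow is the vector field $F : U \to \mathbb{R}^{|V|}$ with components $F(m)_x = -\mathrm{Ric}_{n, m}(x)$, so that \eqref{RicciFlow} reads $\dot m = F(m)$, $m(0) = m_0$. Since $V$ is a finite discrete set, smoothness of $m$ in the space variable is vacuous, and the genuine content of the theorem is existence, uniqueness, and $C^\infty$-regularity in $t$ of the trajectory issuing from $m_0$.

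First I would verify that $F$ is locally Lipschitz on $U$. By the reformulation recalled just before the statement, $\mathrm{Ric}_{n, m}(x) = \lambda_{\min}(A_{n, m}(x))$, and the entries of the curvature matrix $A_{n, m}(x)$ are smooth functions of $m$ on $U$; in particular the map $m \mapsto A_{n, m}(x)$ is locally Lipschitz into the space of symmetric matrices. Composing with $\lambda_{\min}$, which is locally Lipschitz in the matrix entries by the Proposition, shows that each component $F(m)_x$, and hence $F$, is locally Lipschitz on $U$.

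With $F$ locally Lipschitz and $m_0 \in U$, the Picard--Lindel\"of (Cauchy--Lipschitz) theorem yields some $T > 0$ and a unique $C^1$ solution $m : [0, T) \to \mathbb{R}^{|V|}$ with $m(0) = m_0$. Because $m_0(x) > 0$ for every $x$ and $m$ is continuous, after shrinking $T$ if necessary the trajectory stays in $U$, so the solution is a genuine positive vertex weight on $[0, T)$, and uniqueness is inherited directly from the ODE. This already settles local existence and uniqueness.

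The delicate point, and the step I expect to be the main obstacle, is upgrading the $C^1$ trajectory to $C^\infty$ in $t$, since $\lambda_{\min}$ is only Lipschitz in general and can fail to be differentiable exactly at eigenvalue crossings. The strategy is to bootstrap on the set of times where the minimal eigenvalue of $A_{n, m(t, \cdot)}(x)$ is simple: there $\lambda_{\min}$ is a real-analytic function of the matrix entries, so $F$ is $C^\infty$ along the trajectory and the identity $\dot m = F(m)$ forces $m(\cdot, x) \in C^\infty$ by successive differentiation. The real work is therefore to control the times at which the minimal eigenvalue is degenerate; for a real-analytic matrix path the distinct eigenvalue branches are analytic and either coincide identically or meet only at isolated points, so one argues that, after possibly decreasing $T$, the minimal eigenvalue stays simple along the trajectory (which holds in particular whenever it is simple at $t = 0$), yielding $C^\infty$-regularity on $[0, T)$; the genuinely degenerate initial configuration is the case requiring the finest perturbation analysis.
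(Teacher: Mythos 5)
Your core argument --- recasting the flow as $\dot m = F(m)$ on $(0,\infty)^{|V|}$ with $F(m)_x = -\lambda_{\min}(A_{n,m}(x))$, observing that $F$ is locally Lipschitz because the matrix entries depend smoothly on $m$ while $\lambda_{\min}$ is locally Lipschitz in the entries, and then invoking Picard--Lindel\"of --- is exactly the paper's proof; the paper's argument consists of precisely these observations and nothing more. For local existence and uniqueness of a solution your write-up is complete and correct.

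Where you go beyond the paper is in flagging the $C^\infty$ claim, and you are right that it does not follow from Picard alone: a locally Lipschitz right-hand side yields a $C^1$ (indeed $C^{1,1}$) trajectory, and $\lambda_{\min}$ genuinely fails to be differentiable at matrices whose minimal eigenvalue is multiple. The paper's proof is silent on this, so it shares the gap; but your proposed repair does not close it. In the case where the minimal eigenvalue of each $A_{n,m_0}(x)$ is simple, your bootstrap is sound: simplicity is an open condition, a simple eigenvalue is a real-analytic function of the entries, so $F$ is $C^\infty$ near the trajectory for small time and successive differentiation of $\dot m = F(m)$ gives smoothness. In the degenerate case, however, your argument is circular: you appeal to the analyticity of eigenvalue branches along a \emph{real-analytic} matrix path, but at that stage the trajectory $t \mapsto A_{n,m(t,\cdot)}(x)$ is only known to be $C^1$, and for merely $C^1$ paths the branches need not be better than $C^1$ while their pointwise minimum can still have a kink at a crossing; moreover, shrinking $T$ cannot remove a degeneracy sitting at $t=0$, which is exactly the case you leave open. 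This case is not vacuous: on $C_3$ with $m_0 = (1,2,2)$ the square-root term in $\mathrm{Ric}_{m_0}(0)$ vanishes (since $\tfrac{1}{m_0(0)} = \tfrac{1}{m_0(1)}+\tfrac{1}{m_0(2)}$ and $m_0(1)=m_0(2)$), i.e.\ the minimal eigenvalue there is multiple and the vector field is at a genuine non-smooth point. So, as written, your proof (like the paper's) establishes a unique $C^{1,1}$ solution in general, but not the asserted $C^\infty$ regularity for degenerate initial data.
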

\begin{proof}
 Note that for any $m \in \mathbb{R}_+$, 
 \[
  \mathrm{Ric}_{n, m}(x) = \lambda_{\min}(A_{n, m}(x)).
 \]
 Hence, it is locally Lipschitz continuous w.r.t. $m$. By the Picard theorem, the result follows.
\end{proof}
Here $T$ is called the maximal existence time of the Bakry-\'{E}mery Ricci flow.

\subsection{Normalized Bakry-\'{E}mery Ricci Flow}
For a finite reference graph, we define the normalized Bakry-\'{E}mery Ricci flow, which preserves the volume of the graph, via
\begin{align*}
\begin{cases}
 \partial_t m(t, x) = -\mathrm{Ric}_{n, m(t, \cdot)}(x) + \frac{1}{|V|}\sum_{y \in V}\mathrm{Ric}_{n, m(t, \cdot)}(y), \quad \forall \ x \in V, t \in [0, T], \\
 m(0, x) = m_0(x), \quad \forall \ x \in V,
\end{cases}
\end{align*}
where $m_0$ is an initial vertex weight. Then we also have the local existence and uniqueness of the normalized Bakry-\'{E}mery Ricci flow by the same argument as in the proof of Theorem \ref{MainTheorem}. 
\begin{theorem}
 For a finite reference graph $(V, E, w)$, and any $m_0: V \to \mathbb{R}_+$, there exist $T > 0$ and a unique solution $m \in C^{\infty}([0, T) \times V, \mathbb{R}_+)$ to the normalized Bakry-\'{E}mery Ricci flow.
\end{theorem}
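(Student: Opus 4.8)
The plan is to reproduce verbatim the argument for Theorem~\ref{MainTheorem}, the only additional point being that the volume-averaging term does not spoil the regularity of the right-hand side. First I would recast the normalized flow as an autonomous ODE on the open cone $(\mathbb{R}_+)^V \subseteq \mathbb{R}^{|V|}$: writing $m = (m(x))_{x \in V}$ and
\[
 F_x(m) := -\mathrm{Ric}_{n, m}(x) + \frac{1}{|V|}\sum_{y \in V}\mathrm{Ric}_{n, m}(y), \qquad x \in V,
\]
the system becomes $\partial_t m = F(m)$ with $m(0) = m_0$, where $F = (F_x)_{x \in V}$.

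The key step is to verify that $F$ is locally Lipschitz on $(\mathbb{R}_+)^V$. As recalled before Theorem~\ref{MainTheorem}, $\mathrm{Ric}_{n, m}(x) = \lambda_{\min}(A_{n, m}(x))$, and the entries of $A_{n, m}(x)$ are smooth in $m$; combining this with the local Lipschitz continuity of $\lambda_{\min}$ and the fact that a locally Lipschitz function composed with a smooth map is again locally Lipschitz, each $m \mapsto \mathrm{Ric}_{n, m}(x)$ is locally Lipschitz. Since $F_x$ is a finite linear combination of the maps $\{m \mapsto \mathrm{Ric}_{n, m}(y)\}_{y \in V}$, and a finite linear combination of locally Lipschitz functions stays locally Lipschitz, $F$ is locally Lipschitz. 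The Picard--Lindel\"of theorem then yields $T > 0$ and a unique solution $m \in C^1([0, T) \times V, \mathbb{R})$ with $m(0, \cdot) = m_0$; because $m_0(x) > 0$ for every $x$ and $m$ is continuous, shrinking $T$ if necessary keeps $m$ inside $(\mathbb{R}_+)^V$, so it solves the normalized flow as an $\mathbb{R}_+$-valued map.

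I expect the regularity assertion to be the only delicate point, exactly as for Theorem~\ref{MainTheorem}: Picard--Lindel\"of with a merely Lipschitz field produces a $C^1$ solution, whereas $\lambda_{\min}$ fails to be smooth where the lowest eigenvalue of $A_{n, m}(x)$ has multiplicity larger than one. To reach $m \in C^\infty$ one argues that along the solution the lowest eigenvalue stays simple on a (possibly shorter) time interval, on which $\lambda_{\min}$, and hence $F$, is smooth, and a standard bootstrap promotes the $C^1$ solution to $C^\infty$ there. Apart from this shared subtlety the averaging term is harmless, so the proof is a direct transcription of that of Theorem~\ref{MainTheorem}.
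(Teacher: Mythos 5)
Your proposal is correct and takes essentially the same approach as the paper, whose entire proof is the remark that the normalized flow follows ``by the same argument as in the proof of Theorem~\ref{MainTheorem}'' (local Lipschitz continuity of $\lambda_{\min}$ composed with the smooth entries of $A_{n,m}(x)$, plus the Picard theorem, the averaging term being a harmless finite linear combination). In fact you are more careful than the paper on one point: the paper asserts $C^{\infty}$ regularity while its Picard argument only yields $C^1$, and your observation that smoothness requires the lowest eigenvalue to stay simple (or some additional argument) identifies a gap in the paper's own proof rather than in yours.
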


\section{Examples}
In this section, we give some examples of the Bakry-\'{E}mery Ricci flows. We always assume the trivial edge weight, i.e., $w \equiv 1$. The first example is the Bakry-\'{E}mery Ricci flow on a finite tree. Consider the following tree $T_3$.
\begin{figure}[H]
 \centering
  \begin{tikzpicture}
   \filldraw[draw = black, fill = black] (0,0) circle(0.05) (1,0) circle(0.05) (1.8,0.5) circle(0.05) (1.8,-0.5) circle(0.05) (2.75,0.2) circle(0.05) (2.75,0.8) circle(0.05);
   \filldraw[draw = black, fill = black] (2.75,-0.2) circle(0.05) (2.75,-0.8) circle(0.05) (-0.8,0.5) circle(0.05) (-0.8,-0.5) circle(0.05);
   \draw (-0.8,0.5) -- (0,0) -- (1,0) -- (1.8,0.5) -- (2.75,0.8);
   \draw (-0.8,-0.5) -- (0,0); \draw (1.8,0.5) -- (2.75,0.2); \draw(1,0) -- (1.8,-0.5) -- (2.75,-0.8); \draw (1.8,-0.5) -- (2.75,-0.2);
   \node at (1,0.3) {$x_1$}; \node at (0,0.3) {$x_2$}; \node at (1.6,0.7) {$x_3$}; \node at (1.6,-0.7) {$x_4$}; \node at (-1.1, 0.5) {$x_5$};
   \node at (-1.1,-0.5) {$x_6$}; \node at (3.05,0.8) {$x_7$}; \node at (3.05,0.2) {$x_8$}; \node at (3.05,-0.2) {$x_9$}; \node at (3.1,-0.8) {$x_{10}$};
  \end{tikzpicture}
  \caption{$T_3$ with boundary point  $\delta T_3 = \{x_5, x_6, \cdots, x_{10}\}$.}
  \label{T3}
\end{figure}
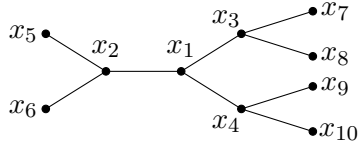
For any $m: V \to \mathbb{R}_+$, the Bakry-\'{E}mery curvatures are given by 
\begin{align*}
 &\mathrel{\phantom{=}}\mathrm{Ric}_m(x_1) \\
  &= \lambda_{\min}\left(\begin{bmatrix} \frac12(\frac{1}{m(x_2)}-\frac{1}{m(x_1)}) & \frac{1}{m(x_1)} &\frac{1}{m(x_1)} \\
 \frac{1}{m(x_1)} & \frac12(\frac{1}{m(x_3)}-\frac{1}{m(x_1)}) & \frac{1}{m(x_1)} \\ 
 \frac{1}{m(x_1)} & \frac{1}{m(x_1)} & \frac12(\frac{1}{m(x_4)}-\frac{1}{m(x_1)}) \end{bmatrix}\right),
\end{align*}
\begin{align*}
 &\mathrel{\phantom{=}}\mathrm{Ric}_m(x_2) \\
 &= \lambda_{\min}\left(\begin{bmatrix} \frac12(\frac{3}{m(x_5)}-\frac{1}{m(x_2)}) & \frac{1}{m(x_2)} & \frac{1}{m(x_2)} \\
 \frac{1}{m(x_2)} & \frac12(\frac{3}{m(x_6)}-\frac{1}{m(x_2)}) & \frac{1}{m(x_2)} \\
 \frac{1}{m(x_2)} & \frac{1}{m(x_2)} & \frac12(\frac{1}{m(x_1)}-\frac{1}{m(x_2)}) \end{bmatrix}\right),
\end{align*}
and
\[
 \mathrm{Ric}_m(x_5) = \frac12\left(\frac{1}{m(x_2)}+\frac{1}{m(x_5)}\right).
\]
We prove the finite-time blow up for the flow on the tree $T_3$.
\begin{theorem}
 For $T_3$ as in Figure \ref{T3}, $m_0: V \to \mathbb{R}_+$, let $m \in C^{\infty}([0, T) \times V, \mathbb{R}_+)$ be a solution to the Bakry-\'{E}mery Ricci flow. Then $$T \leq \frac{(\min_{x \in \delta T_3}m_0(x))^2}{2}.$$
\end{theorem}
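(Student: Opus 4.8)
The plan is to collapse the entire system to a single scalar differential inequality for the weight at the boundary vertex carrying the least initial mass. Fix $x_0 \in \delta T_3$ with $m_0(x_0) = \min_{x \in \delta T_3} m_0(x)$, and let $y$ be its unique neighbor in $T_3$. Because $x_0$ is a leaf, its curvature is given by the explicit formula recorded above,
\[
 \mathrm{Ric}_m(x_0) = \frac12\left(\frac{1}{m(y)} + \frac{1}{m(x_0)}\right),
\]
so, writing $u(t) := m(t,x_0)$, the flow restricted to $x_0$ reads $u'(t) = -\tfrac12\bigl(\tfrac{1}{m(t,y)} + \tfrac{1}{u(t)}\bigr)$. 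In particular $u$ is strictly decreasing and stays positive on $[0,T)$, which is exactly what lets us convert a curvature lower bound into an upper bound on the existence time $T$.

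The key step is to pass to the square and control the coupling to the neighbor. Setting $\phi(t) := u(t)^2$ and differentiating gives
\[
 \phi'(t) = 2u(t)u'(t) = -1 - \frac{m(t,x_0)}{m(t,y)} .
\]
Thus everything reduces to bounding the ratio $m(t,x_0)/m(t,y)$ below by $1$ along the flow: if $m(t,x_0) \ge m(t,y)$, then $\phi'(t) \le -2$, and integrating from $0$ yields $\phi(t) \le m_0(x_0)^2 - 2t$. Since $\phi(t) = m(t,x_0)^2 \ge 0$ must persist as long as the smooth positive solution exists, this forces $u(t)\to 0$ and hence $T \le \tfrac12 m_0(x_0)^2 = \tfrac12\bigl(\min_{x\in\delta T_3} m_0(x)\bigr)^2$, which is the claim.

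I expect the comparison $m(t,x_0) \ge m(t,y)$ (equivalently $\tfrac{1}{m(y)} \ge \tfrac{1}{m(x_0)}$) to be the main obstacle, and the step where the choice of $x_0$ as the minimal boundary weight and the geometry of $T_3$ must genuinely enter. The natural way to attempt it is a comparison / maximum-principle argument: differentiate $m(t,x_0) - m(t,y)$ using the leaf formula at $x_0$ together with the $3\times 3$ eigenvalue expression for $\mathrm{Ric}_m(y)$ displayed above (for $y \in \{x_2,x_3,x_4\}$), and argue that the sign of the difference is preserved by the flow. A robust fallback that needs no control on the neighbor is simply to discard the positive term $\tfrac{1}{m(y)}$: then $u'(t) \le -\tfrac{1}{2u(t)}$ gives $\phi'(t) \le -1$ and the weaker bound $T \le m_0(x_0)^2$. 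The factor-two sharpening to $\tfrac12 m_0(x_0)^2$ is precisely the content of the neighbor comparison, and is the one point of the argument I would scrutinize most carefully.
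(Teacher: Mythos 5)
Your reduction is, up to notation, exactly the paper's proof: the paper fixes the minimizing leaf $x_5$, quotes $\mathrm{Ric}_{m(t,\cdot)}(x_5)=\frac12\left(\frac{1}{m(t,x_2)}+\frac{1}{m(t,x_5)}\right)$, asserts
\[
\partial_t m(t,x_5)=-\mathrm{Ric}_{m(t,\cdot)}(x_5)\le-\frac{1}{m(t,x_5)},
\]
and integrates to get $m^2(t,x_5)\le m_0^2(x_5)-2t$, hence $T\le\frac12 m_0^2(x_5)$. But the asserted inequality is precisely your missing comparison in disguise: it rearranges to $\frac{1}{m(t,x_2)}\ge\frac{1}{m(t,x_5)}$, i.e.\ $m(t,x_2)\le m(t,x_5)$, and the paper offers no justification for it. So the step you singled out as "the one point to scrutinize" is the step the paper performs by fiat; your attempt makes the gap visible rather than closing it, and in that sense it is no less complete than the paper's own argument.

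The bad news is that the gap is genuine and cannot be closed, so your proposed maximum-principle fix will not work. The minimum in the statement runs only over boundary vertices, so nothing prevents $m_0(x_2)>m_0(x_5)$: take $m_0\equiv1$ on $\delta T_3$ and $m_0\equiv N$ on the four interior vertices, with $N$ large. From the displayed curvature matrices, every interior vertex $v$ has an interior neighbor $u$ whose diagonal entry is $\frac12\left(\frac{1}{m(u)}-\frac{1}{m(v)}\right)$, and the minimal eigenvalue is at most any diagonal entry; evaluating at the interior vertex of minimal weight shows that $\min_{v\ \text{interior}}m(t,v)$ is non-decreasing, so interior weights stay $\ge N$ for all time. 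Leaf weights are decreasing (their curvature is positive), so your identity $\phi'(t)=-1-m(t,x_0)/m(t,y)$ gives $\phi'(t)\ge-1-\frac1N$ for every leaf, and all weights remain in a compact subset of $\mathbb{R}_+$ on any interval $[0,\tau]$ with $\tau<\frac{N}{N+1}$; by the local existence theorem the flow therefore survives to $T\ge\frac{N}{N+1}>\frac12=\frac12\left(\min_{x\in\delta T_3}m_0(x)\right)^2$ once $N\ge2$. Thus the comparison $m(t,x_0)\ge m(t,y)$ fails already at $t=0$ for admissible data, the factor $\frac12$ cannot be recovered without an additional hypothesis on $m_0$ (e.g.\ that the interior weights do not exceed the boundary ones, which would then propagate), and your "robust fallback" $T\le\left(\min_{x\in\delta T_3}m_0(x)\right)^2$, obtained by discarding $\frac{1}{m(y)}$, is the bound that the argument actually proves.
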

\begin{proof}
 W.l.o.g., we assume that $m_0(x_5) = \min_{x \in \delta T_3}m_0(x)$. Since 
 \[
  \mathrm{Ric}_{m(t, \cdot)}(x_5) = \frac12\left(\frac{1}{m(t, x_2)} + \frac{1}{m(t, x_5)}\right)
 \]
and by the Ricci flow equation, we have
 \[
  \partial_tm(t, x_5) = -\mathrm{Ric}_{m(t, \cdot)}(x_5) \leq -\frac{1}{m(t, x_5)}.
 \]
 Hence, we get 
 \[
  m^2(t, x_5) \leq m_0^2(x_5) - 2t.
 \]
 It follows that $T \leq \frac{m_0^2(x_5)}{2}$, as desired.
\end{proof}

In the following, we consider the Bakry-\'{E}mery Ricci flow on circles. Let $C_k = (V, E)$ be the circle of length $k$, i.e., the set of vertices is given by $\{0, 1, \cdots, k-1\}$ and $i \sim j$ if and only if $|i-j|=1 \mod k$. We denote by $C_k$ the cycle of length $k$.

We firstly consider the Bakry-\'{E}mery Ricci flow on $C_3$. For $m: V \to \mathbb{R}_+$, the Bakry-\'{E}mery curvature is given by, for $i = 0, 1, 2$,
\begin{align*}
 &\mathrel{\phantom{=}}\mathrm{Ric}_m(i) \\
 &= -\sqrt{\left(\frac{1}{m(i)}-\frac{1}{m(i-1)}-\frac{1}{m(i+1)}\right)^2+\frac{25}{16}\left(\frac{1}{m(i-1)}-\frac{1}{m(i+1)}\right)^2} \\
 &\mathrel{\phantom{=}}+\frac{7}{4}\left(\frac{1}{m(i-1)}+\frac{1}{m(i+1)}\right). 
\end{align*}
\begin{proposition}\label{PreserveMaxAndMin}
 For $C_3$ and any $m_0: V \to \mathbb{R}_+$, let $m \in C^{\infty}([0, T) \times V, \mathbb{R}_+)$ be the solution to the Bakry-\'{E}mery Ricci Flow. 
 \begin{itemize}
  \item[(1).] If $m_0(0) > \max\{m_0(1), m_0(2)\}$, then
  \[
   m(t, 0) > \max\{m(t, 1), m(t, 2)\}, \quad \forall \ t \in [0, T).
  \]
  \item[(2).] If $m_0(0) < \min\{m_0(1), m_0(2)\}$, then
  \[
   m(t, 0) < \min\{m(t, 1), m(t, 2)\}, \quad \forall \ t \in [0, T).
  \]
 \end{itemize}
\end{proposition}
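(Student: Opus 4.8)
The plan is to exploit the full symmetry of the cycle $C_3$, whose automorphism group is the symmetric group $S_3$, together with the uniqueness of solutions granted by Theorem \ref{MainTheorem}. First I would record the equivariance of the flow: for any graph automorphism $\sigma$ of $C_3$, writing $(\sigma\cdot m)(x):=m(\sigma^{-1}x)$, the Bakry-\'{E}mery curvature satisfies $\mathrm{Ric}_{\sigma\cdot m}(x)=\mathrm{Ric}_m(\sigma^{-1}x)$, because the curvature at a vertex depends only on the weighted local structure, which $\sigma$ carries isometrically. Differentiating in $t$ then shows that if $m(t,\cdot)$ solves \eqref{RicciFlow}, so does $\hat m(t,\cdot):=\sigma\cdot m(t,\cdot)$, now with initial datum $\sigma\cdot m_0$.

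For part (1), assume $m_0(0)>\max\{m_0(1),m_0(2)\}$ and let $\tau=(0\,1)\in S_3$ be the transposition swapping $0$ and $1$. The reflected weight $\hat m:=\tau\cdot m$ is a second solution of \eqref{RicciFlow}, and since $m_0(0)\neq m_0(1)$ we have $\hat m(0,\cdot)\neq m_0$. By the uniqueness of solutions to the Lipschitz system, forward and backward in time, two solutions with distinct initial data can never coincide, so $m(t,\cdot)\neq\hat m(t,\cdot)$ for all $t\in[0,T)$. Unwinding the definition of $\tau$, the identity $m(t,\cdot)=\hat m(t,\cdot)$ is equivalent to $m(t,0)=m(t,1)$; hence $m(t,0)\neq m(t,1)$ throughout. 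As $t\mapsto m(t,0)-m(t,1)$ is continuous and positive at $t=0$, it stays positive, giving $m(t,0)>m(t,1)$ for all $t$. Applying the same argument to $\tau'=(0\,2)$ yields $m(t,0)>m(t,2)$, and together these give the claim. Part (2) is identical: the hypothesis $m_0(0)<\min\{m_0(1),m_0(2)\}$ again forces $m_0(0)\neq m_0(1)$ and $m_0(0)\neq m_0(2)$, and the same reflection-plus-uniqueness argument preserves the now negative sign of each difference.

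The step I expect to be the genuine obstacle is understanding why a naive maximum-principle or first-derivative argument is inconclusive here. If one lets $u(t)=m(t,0)-m(t,1)$ and tries to rule out a first touching time $t_0$ with $u(t_0)=0$ by examining $\dot u(t_0)=-\mathrm{Ric}_{m(t_0,\cdot)}(0)+\mathrm{Ric}_{m(t_0,\cdot)}(1)$, one finds $\dot u(t_0)=0$: indeed, on the wall $\{m(0)=m(1)\}$ the transposition $\tau$ fixes the weight, so equivariance forces $\mathrm{Ric}_m(0)=\mathrm{Ric}_m(1)$ there. This can also be checked directly from the explicit curvature formula, both sides reducing to $-\sqrt{c^2+\tfrac{25}{16}(c-a)^2}+\tfrac74(a+c)$ with $a=1/m(0)=1/m(1)$ and $c=1/m(2)$. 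Thus the first derivative test is degenerate, which is precisely why I route the argument through the equivariance of the flow and backward uniqueness rather than through a sign computation; as a fallback, the local Lipschitz bound of the Proposition gives $|\dot u|\le L|u|$ near the wall, and Gronwall's inequality then prevents $u$ from reaching $0$.
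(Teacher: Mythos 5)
Your proof is correct, and it rests on the same two pillars as the paper's proof --- the symmetry of the flow under the transposition $(0\,1)$ and uniqueness for ODEs with locally Lipschitz right-hand side, applied backward from a hypothetical touching time --- but the implementation is genuinely different and in some respects cleaner. The paper argues at a first time $t_1$ with $m(t_1,0)=m(t_1,1)$: it writes down the explicit reduced two-variable system \eqref{SameInitialData} on the invariant diagonal $\{m(0)=m(1)\}$, solves it by Picard near $t_1$, symmetrizes to obtain a solution of the full flow, and then iterates local uniqueness (``repeating this process'') to drag the equality back to $t=0$, contradicting $m_0(0)>m_0(1)$. You instead produce the competing solution abstractly as the reflected weight $\tau\cdot m$ and apply two-sided uniqueness once: since $\tau\cdot m_0\neq m_0$, the two trajectories can never meet, so $m(t,0)\neq m(t,1)$ for every $t$, and continuity preserves the sign. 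Your route needs no explicit curvature formulas, so it works verbatim for any automorphism of any reference graph (in particular it also yields the paper's subsequent Remark that $m_0(0)=m_0(1)$ forces $m(t,0)=m(t,1)$ for all $t$), and it replaces the paper's somewhat informal backward iteration by a single application of uniqueness on the whole interval; the paper's route, in exchange, exhibits the concrete reduced ODE that governs symmetric solutions. Two points you should make explicit in a write-up: (i) backward uniqueness, which Theorem \ref{MainTheorem} does not itself assert, follows from Picard applied to the time-reversed (still autonomous and locally Lipschitz) system, and it is exactly what licenses ``two solutions with distinct initial data never coincide''; (ii) the equivariance identity $\mathrm{Ric}_{\sigma\cdot m}(x)=\mathrm{Ric}_m(\sigma^{-1}x)$ deserves a one-line verification, e.g.\ via $\Delta_{\sigma\cdot m}f=\big(\Delta_m(f\circ\sigma)\big)\circ\sigma^{-1}$, which also confirms your observation that the curvatures at $0$ and $1$ agree on the diagonal and hence that the naive first-derivative test is degenerate. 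Your closing Gronwall bound $|\dot u|\le L|u|$ is likewise a valid self-contained substitute for invoking the uniqueness theorem.
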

\begin{proof}
 W.l.o.g., we prove (1). Suppose that there exists $t_1 \in (0, T)$ such that $m(t_1, 0) = \max\{m(t_1, 1), m(t_1, 2)\}$. W.l.o.g., we assume that $m(t_1, 0) = m(t_1,1)$. Then we consider the following ODEs
 \begin{equation}\label{SameInitialData}
  \begin{cases}
   \partial_tm(t, 1) = -\frac74(\frac{1}{m(t, 1)}+\frac{1}{m(t, 2)})+\sqrt{\frac{1}{(m(t, 2))^2}+\frac{25}{16}(\frac{1}{m(t, 1)}-\frac{1}{m(t, 2)})^2}, \\
   \partial_tm(t, 2) = -\frac72\frac{1}{m(t, 1)}+\Big|\frac{1}{m(t, 2)}-\frac{2}{m(t, 1)}\Big|, \\
   m(t_1, 1) = m(t_1, 0), m(t_1, 2) = m(t_1, 2).
  \end{cases}
 \end{equation}
 According to the Picard theorem, there exist $\varepsilon > 0$ and a unique solution $(m(t,1), m(t, 2))$ on $(t_1-\varepsilon, t_1+\varepsilon)$ to \eqref{SameInitialData}. Then one can see that $(m(t, 1), m(t, 1), m(t, 2))$ is a solution to the Ricci flow on $(t_1-\varepsilon, t_1+\varepsilon)$. By the local uniqueness of the solution to ODEs, we get that
 \[
  m(t, 0) = m(t, 1), \quad \forall \ t \in (t_1-\varepsilon, t_1+\varepsilon).
 \]
 Repeating this process, we obtain that $\lim_{t \to 0}m(t, 0) = \lim_{t \to 0}m(t, 1)$, which is a contradiction to the $m_0(0) > m_0(1)$.
\end{proof}
\begin{remark}
 Due to the local uniqueness of the solution to ODEs, we know that for Bakry-\'{E}mery Ricci flow on $C_3$, if $m_0(0) = m_0(1)$, then for any $t \in [0, T)$, $m(t, 0) = m(t, 1)$.
\end{remark}
\begin{theorem}\label{MaximumDecreasingForC3}
 For $C_3$ and any $m_0: V \to \mathbb{R}_+$, let $m \in C^{\infty}([0, T) \times V, \mathbb{R}_+)$ be a solution to the Bakry-\'{E}mery Ricci flow. Then $\max_{x \in V}m(t, x)$ is decreasing in $t$. Moreover, $T \leq \max_{x \in V}(m_0(x))^2$.
\end{theorem}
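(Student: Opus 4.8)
The plan is to run a maximum principle. Write $M(t) := \max_{x \in V} m(t, x)$. The whole theorem follows once I establish the pointwise curvature estimate
\[
\mathrm{Ric}_{m(t,\cdot)}(x^*) \ge \frac{1}{2\, m(t, x^*)}
\]
at every vertex $x^*$ attaining the maximum $m(t, x^*) = M(t)$. This says the curvature is strictly positive at a maximizer, so along the flow $\partial_t m(t, x^*) = -\mathrm{Ric}_{m(t,\cdot)}(x^*) < 0$, which forces $M(t)$ to decrease; and its quantitative form gives a definite rate that produces the blow-up bound.

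First I would establish the pointwise estimate. Fix $t$ and a maximizer $i = x^*$, and abbreviate $a = 1/m(i)$, $b = 1/m(i-1)$, $c = 1/m(i+1)$. Since $m(i) \ge m(i\pm1)$ we have $b \ge a$ and $c \ge a$. Using the explicit formula for $\mathrm{Ric}_m(i)$ on $C_3$, the claimed inequality $\mathrm{Ric}_m(i) \ge a/2$ is equivalent to
\[
\frac74(b+c) - \frac{a}{2} \ge \sqrt{(a-b-c)^2 + \tfrac{25}{16}(b-c)^2}.
\]
The left-hand side is positive because $b + c \ge 2a$. Squaring and substituting $b = a + p$, $c = a + q$ with $p, q \ge 0$, and writing $u = p+q$, $v = p - q$, the inequality reduces to
\[
8a^2 + \frac{17}{2}a u + \frac{33}{16}u^2 - \frac{25}{16}v^2 \ge 0.
\]
Since $p, q \ge 0$ give $v^2 \le u^2$, the left-hand side is bounded below by $8a^2 + \frac{17}{2}au + \frac12 u^2$, which is manifestly nonnegative. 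This proves the estimate.

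Next I would pass from the pointwise estimate to the two conclusions. The function $M(t)$ is a maximum of finitely many $C^\infty$ functions, hence locally Lipschitz; its right derivative equals $\max\{\partial_t m(t, x^*) : x^* \text{ a maximizer}\}$ (Danskin's theorem), which is negative by the estimate, so $M$ is strictly decreasing, giving the first assertion. For the blow-up bound set $N(t) := M(t)^2 = \max_x m(t,x)^2$; at any maximizer $\frac{d}{dt} m(t,x^*)^2 = 2 m(t,x^*)\,\partial_t m(t,x^*) = -2 m(t,x^*) \mathrm{Ric}_{m(t,\cdot)}(x^*) \le -1$ by the estimate, so the upper Dini derivative satisfies $D^+ N(t) \le -1$. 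Integrating, $M(t)^2 \le M(0)^2 - t = \max_x (m_0(x))^2 - t$. Since $m(t, \cdot) > 0$ forces $M(t)^2 > 0$ on $[0, T)$, the solution cannot persist beyond $t = \max_x (m_0(x))^2$, whence $T \le \max_{x \in V}(m_0(x))^2$.

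The main obstacle is the pointwise curvature estimate: the curvature formula on $C_3$ involves a square root, so obtaining the sharp constant $\tfrac12$ (exactly what yields the stated bound $T \le \max_x m_0(x)^2$ rather than a weaker one) requires squaring carefully and exploiting $|b-c| \le b+c$. A secondary technical point is that $M(t)$ is only Lipschitz, not differentiable where the maximizer jumps; this is handled cleanly with Dini derivatives (or Danskin's theorem), so that the differential inequality $D^+ M^2 \le -1$ can be integrated.
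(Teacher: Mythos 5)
Your proposal is correct, and its core is the same as the paper's: a lower bound $\mathrm{Ric}_{m}(x^*)\ge \frac{1}{2m(x^*)}$ at a maximizing vertex, followed by integrating $\frac{d}{dt}m^2\le -1$. (I checked your algebra: the squared difference does reduce to $8a^2+\frac{17}{2}au+\frac{33}{16}u^2-\frac{25}{16}v^2$, and $v^2\le u^2$ closes it.) The differences are in the scaffolding, and they are worth noting. First, the paper obtains the same estimate not by squaring but by rationalizing: it multiplies by the conjugate $F(m)$, bounds $F(m)\le 4\left(\frac{1}{m(1)}+\frac{1}{m(2)}\right)$, and bounds the resulting numerator below; both computations give the same constant $\frac12$. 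Second, and more substantively, the paper avoids the issue of the maximizer moving in time by invoking its Proposition \ref{PreserveMaxAndMin} (order preservation on $C_3$), so that the initially maximal vertex stays maximal and one can work with the single smooth function $m(t,0)$ and an ordinary ODE comparison. You instead run the maximum principle directly on the Lipschitz function $M(t)=\max_x m(t,x)$ via Danskin/Dini derivatives. This makes your proof self-contained (it does not depend on Proposition \ref{PreserveMaxAndMin}, whose own proof is delicate) and robust to the maximizer switching vertices; indeed, your technique is exactly the one the paper implicitly needs for $C_4$, where it concedes that order preservation may fail and asserts an a.e.\ derivative bound on the maximum. What the paper's route buys in exchange is that it stays within smooth ODE comparison and reuses a result it has already established for $C_3$.
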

\begin{proof}
 W.l.o.g., we assume that $m_0(0) \geq \max\{m_0(1), m_0(2)\}$, then we get that $m(t, 0) \geq \max\{m(t, 1), m(t, 2)\}$ for any $t \in [0, T)$. Noting that for any $m \in \mathbb{R}_+^V$ satisfying $m(0) \geq \max\{m(1), m(2)\}$, we have
 \begin{align*}
  &\mathrel{\phantom{=}}\mathrm{Ric}_m(0) \\
  &= \frac74(\frac{1}{m(1)}+\frac{1}{m(2)}) - \sqrt{(\frac{1}{m(0)}-\frac{1}{m(1)}-\frac{1}{m(2)})^2+\frac{25}{16}(\frac{1}{m(1)}-\frac{1}{m(2)})^2} \\
  &= \left(\frac12(\frac{1}{m(1)^2}+\frac{1}{m(2)^2})-\frac{1}{m(0)^2}+\frac{2}{m(0)m(1)}+\frac{2}{m(0)m(2)}+\frac{29}{4m(1)m(2)}\right) \\
  &\mathrel{\phantom{=}} \times (F(m))^{-1},
 \end{align*}
 where $F(m)$
 \begin{align*}
  F(m) &= \frac74(\frac{1}{m(1)}+\frac{1}{m(2)}) \\
  &\mathrel{\phantom{=}} + \sqrt{(\frac{1}{m(0)}-\frac{1}{m(1)}-\frac{1}{m(2)})^2+\frac{25}{16}(\frac{1}{m(1)}-\frac{1}{m(2)})^2} \\
  &\leq \frac74(\frac{1}{m(1)}+\frac{1}{m(2)}) + \frac54(\frac{1}{m(1)}+\frac{1}{m(2)}) + \frac{1}{m(1)}+\frac{1}{m(2)}-\frac{1}{m(0)} \\
  &\leq 4(\frac{1}{m(1)}+\frac{1}{m(2)}).
 \end{align*}
This yields that
 \[
  \mathrm{Ric}_m(0) \geq \frac{\frac{2}{m(0)m(1)}+\frac{2}{m(0)m(2)}}{4(\frac{1}{m(1)}+\frac{1}{m(2)})} = \frac{1}{2m(0)},
 \]
 and hence,
 \[
  \partial_tm(t, 0) \leq -\frac12\frac{1}{m(t, 0)},
 \]
 which implies that $m^2(t, 0) \leq m_0(0)^2 - t$, and $T \leq m_0(0)^2$. 
\end{proof}
\begin{remark}
 For $C_3$, unlike $\max_{x \in V}m(t, x)$, $\min_{x \in V}m(t, x)$ does not have monotonically increasing properties, i.e., $\min_{x \in V}m(t, x)$ may first increase and then decrease or decrease all the time. Another interesting observation is that for $i \in \{0, 1, 2\}$, $\mathrm{Ric}_{m(t, \cdot)}(i)$ are not equal to zero at the same time.
\end{remark}
\begin{proposition}
 For $C_3$ and any $m_0: V \to \mathbb{R}_+$ satisfying $m_0(0) < m_0(1) < m_0(2)$, let $m \in C^{\infty}([0, T) \times V, \mathbb{R}_+)$ be a solution to the Bakry-\'{E}mery Ricci flow. Suppose that $m(T, 0) = 0$, then $m(T, 1) = 0$.
\end{proposition}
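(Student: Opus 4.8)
The plan is to combine a monotonicity statement for $m(t,1)$ with a barrier-type obstruction at the vertex $0$. Throughout I write $a=\tfrac{1}{m(t,0)}$, $b=\tfrac{1}{m(t,1)}$, $c=\tfrac{1}{m(t,2)}$. First I would record the orderings that persist along the flow. Since $m_0(0)<\min\{m_0(1),m_0(2)\}$, Proposition \ref{PreserveMaxAndMin}(2) gives $m(t,0)<m(t,1)$ and $m(t,0)<m(t,2)$, i.e. $a>b$ and $a>c$, for all $t\in[0,T)$. Moreover, since $C_3$ is invariant under the transposition of the vertices $1$ and $2$, the uniqueness part of the Picard theorem (exactly as in the proof of Proposition \ref{PreserveMaxAndMin}) shows that the strict inequality $m_0(1)<m_0(2)$ is preserved, so $m(t,1)<m(t,2)$, i.e. $b>c$, for all $t$.

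Second, I would show $\mathrm{Ric}_{m(t,\cdot)}(1)>0$ for every $t$. Using the formula for $\mathrm{Ric}_m(1)$ and that $\tfrac74(a+c)>0$, this is equivalent, after squaring, to
\[
 \tfrac12 a^2+\tfrac{29}{4}ac+\tfrac12 c^2+2(a+c)b-b^2>0 .
\]
Because $a>b$ and $c>0$ one has $a+c>b$, hence $2(a+c)b>2b^2>b^2$, so the left-hand side is a sum of positive terms. By the flow equation this means $m(t,1)$ is strictly decreasing on $[0,T)$, so the limit $L_1:=\lim_{t\to T}m(t,1)\ge 0$ exists and the value $m(T,1)$ is well defined; the ordering also gives $m(t,2)>m(t,1)\ge L_1$.

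Third, I would argue by contradiction: suppose $m(T,1)=L_1>0$. Then $b,c\le 1/L_1$ are bounded, while $a=\tfrac{1}{m(t,0)}\to\infty$ as $t\to T$ by hypothesis. Bounding the square root in $\mathrm{Ric}_m(0)$ from below by $|a-b-c|=a-b-c$, valid once $a\ge b+c$, gives
\[
 \mathrm{Ric}_{m(t,\cdot)}(0)\le -a+\tfrac{11}{4}(b+c)\le -\frac{1}{m(t,0)}+C,\qquad C:=\frac{11}{2L_1},
\]
so that $\partial_t m(t,0)=-\mathrm{Ric}_{m(t,\cdot)}(0)\ge \tfrac{1}{m(t,0)}-C$. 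Consequently, whenever $m(t,0)<\tfrac{1}{2C}$ one has $\partial_t m(t,0)>C>0$. Choosing $t_1<T$ with $m(t_1,0)<\tfrac{1}{2C}$ (possible since $m(t,0)\to 0$) and running the standard barrier argument — if $m(\cdot,0)$ ever dropped back to $m(t_1,0)$, its derivative at the first such time would be $\le 0$, contradicting the positive lower bound — shows $m(t,0)\ge m(t_1,0)>0$ on $[t_1,T)$, contradicting $m(T,0)=0$. Hence $L_1=0$, that is $m(T,1)=0$.

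The main obstacle is the second step: establishing the clean sign $\mathrm{Ric}_{m(t,\cdot)}(1)>0$ that holds unconditionally along the flow (not merely asymptotically as $m(0)\to 0$), because this is what turns the barrier conclusion $\liminf_{t\to T}m(t,1)=0$ into a genuine limit. The barrier estimate at vertex $0$ is the other delicate point: one must verify that the threshold $\tfrac{1}{2C}$ is small enough that $a\ge b+c$ holds there, so that the lower bound $\partial_t m(t,0)\ge \tfrac{1}{m(t,0)}-C$ is justified, and that the boundedness of $c$ genuinely relies on the preserved ordering $m(t,1)<m(t,2)$.
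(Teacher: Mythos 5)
Your proof is correct, and at its core it runs the same contradiction as the paper's own proof: assuming $m(T,1)>0$, the reciprocals $1/m(t,1)$ and $1/m(t,2)$ stay bounded while $1/m(t,0)\to\infty$, so $\mathrm{Ric}_{m(t,\cdot)}(0)\to-\infty$ and $\partial_t m(t,0)$ becomes large and positive, which is incompatible with $m(t,0)\to 0$. The difference lies in what is made rigorous. The paper's four-line argument asserts $\partial_t m(T,0)\le 0$ (a derivative at the endpoint, where the solution is not defined) and implicitly assumes both that the limits $m(T,1),m(T,2)$ exist and that the ordering of vertices $1$ and $2$ persists; you supply all of this. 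Your automorphism/uniqueness argument preserves $m(t,1)<m(t,2)$; your unconditional positivity $\mathrm{Ric}_{m(t,\cdot)}(1)>0$ is verified correctly — the rationalized inequality $\tfrac12 a^2+\tfrac{29}{4}ac+\tfrac12 c^2+2(a+c)b-b^2>0$ is exactly the numerator appearing in the paper's quotient formula in the proof of Theorem \ref{MaximumDecreasingForC3}, with the roles of the vertices permuted, and the sign check via $a+c>b$ is valid — and this monotonicity is what makes $m(T,1)$ well defined as a limit, a point the paper glosses over. Your threshold verification in the barrier step also checks out: if $m(t,0)<\tfrac{1}{2C}$ with $C=\tfrac{11}{2L_1}$, then $\tfrac{1}{m(t,0)}>2C=\tfrac{11}{L_1}>\tfrac{2}{L_1}\ge b+c$, so $|a-b-c|=a-b-c$, the estimate $\partial_t m(t,0)\ge \tfrac{1}{m(t,0)}-C>C>0$ is justified, and the first-return argument forbids $m(t,0)\to 0$. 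In short, your route is the paper's mechanism made airtight: the monotonicity lemma for $m(t,1)$ and the barrier at vertex $0$ are genuine additions that buy rigor (no differentiation at $t=T$, no unproven existence of limits), at the cost of a substantially longer proof.
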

\begin{proof}
 If not, we assume that $m(T, 1) = c_1 >0$, then $m(T, 2) = c_2 > c_1 > 0$, and hence, $\mathrm{Ric}_{m(t, \cdot)}(0) \to -\infty$, as $t \to T$, which implies that $\partial_t m(t, 0) \to \infty$ as $t \to T$.
  
On the other hand, since $m(t, 0) > 0$ for $t \in [0, T)$ and $m(T, 0) = 0$, we have $\partial_t m(T, 0) \leq 0$. The contradiction shows that $m(T, 1) =0$. As desired.
\end{proof}
\begin{figure}[H]
 \centering
  \includegraphics[width = 0.7\textwidth]{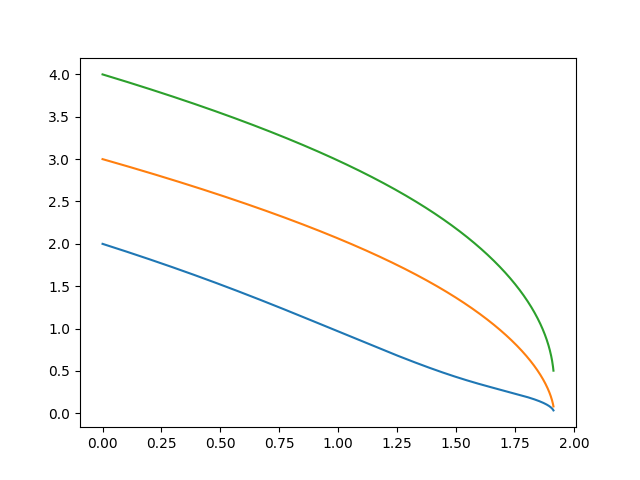}
 \caption{Bakry-\'{E}mery Ricci flow on $C_3$ with $m_0 = (2, 3, 4)$. In this cases, $\min_{x \in V}m(t, x)$ is decreasing.}
\end{figure}

Next we consider the Bakry-\'{E}mery Ricci flow on $C_4$. For $m: V \to \mathbb{R}_+$, the Bakry-\'{E}mery curvature is given by, for $i = 0, 1, 2, 3$,
\begin{align*}
 &\mathrel{\phantom{=}}\mathrm{Ric}_m(i) \\
 &=\frac{m(i-1)+m(i+1)}{2m(i-1)m(i+1)} + \frac{2}{m(i-1)+m(i+1)} \\
 &\mathrel{\phantom{=}}-\sqrt{\left(\frac{1}{m(i)}-\frac{2}{m(i-1)+m(i+1)}\right)^2+\left(\frac{m(i+1)-m(i-1)}{2m(i-1)m(i+1)}\right)^2}.
\end{align*}
Since there does not have the symmetry as in the Bakry-\'{E}mery Ricci flow on $C_3$, one may not be able to obtain properties similar to Proposition \ref{PreserveMaxAndMin}, see Figure \ref{FlowC4}. But a similar proof to Theorem \ref{MaximumDecreasingForC3} shows that for a.e. $t \in [0, T)$, $\frac{d}{dt} \max_{x \in V}m(t, x) \leq -\frac{2}{\max_{x \in V}m(t, x)}$. So that we have the following theorem.
\begin{theorem}
 For $C_4$ and any $m_0 \in \mathbb{R}_+^V$, let $m \in C^{\infty}([0, T) \times V, \mathbb{R}_+)$ be a solution to the Bakry-\'{E}mery Ricci flow. Then $T \leq \frac{\max_{x \in V}m_0(x)^2}{4}$.
\end{theorem}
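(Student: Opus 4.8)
The plan is to adapt the argument of Theorem \ref{MaximumDecreasingForC3}, but since $C_4$ lacks the symmetry exploited for $C_3$ (so that no single vertex need stay maximal, as noted in the remark before the statement), I will not track one maximizing vertex. Instead I work directly with $M(t) := \max_{x \in V} m(t, x)$. Being the maximum of finitely many smooth functions $t \mapsto m(t, x)$, $M$ is locally Lipschitz, hence differentiable for a.e. $t \in [0, T)$; and at any such $t$ the standard envelope fact gives $\frac{d}{dt} M(t) = \partial_t m(t, i) = -\mathrm{Ric}_{m(t, \cdot)}(i)$ for any vertex $i$ realizing $m(t, i) = M(t)$ (because $m(\cdot, i) - M$ attains a maximum at $t$, so their derivatives agree there). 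Thus everything reduces to the pointwise curvature estimate: if $m(i) = \max_{x} m(x)$, then $\mathrm{Ric}_m(i) \geq \frac{2}{m(i)}$. Granting this, for a.e. $t$ we obtain $\frac{d}{dt} M(t)^2 = 2M(t)\frac{d}{dt}M(t) \leq -4$, hence $M(t)^2 \leq M(0)^2 - 4t$; since $M(t) > 0$ on $[0, T)$ this forces $M(0)^2 - 4t > 0$ there, i.e. $T \leq \frac{\max_{x} m_0(x)^2}{4}$.

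The heart of the proof, and the main obstacle, is the pointwise estimate. Writing $a = m(i-1)$, $b = m(i+1)$, $c = m(i)$ with $c \geq \max\{a, b\}$, the curvature formula reads $\mathrm{Ric}_m(i) = P - \sqrt{Q}$, where $P = \frac{1}{2a} + \frac{1}{2b} + \frac{2}{a+b}$ and $Q = \left(\frac{1}{c} - \frac{2}{a+b}\right)^2 + \left(\frac{b-a}{2ab}\right)^2$. From $a, b \leq c$ one sees at once that $\frac{1}{2a} + \frac{1}{2b} \geq \frac{1}{c}$ and $\frac{2}{a+b} \geq \frac{1}{c}$, so $P \geq \frac{2}{c}$; hence $P - \frac{2}{c} \geq 0$ and it suffices to prove $\left(P - \frac{2}{c}\right)^2 \geq Q$. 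Clearing denominators (multiplying by $abc^2(a+b) > 0$) reduces this to the polynomial inequality $g(a,b,c) \geq 0$ for all $c \geq \max\{a, b\}$, where
\[
 g = 3(a+b)c^2 - 2(a^2 + b^2 + 4ab)c + 3ab(a+b).
\]

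I would handle $g$ by viewing it as an upward-opening parabola in $c$. Its vertex lies at $c^* = \frac{a^2 + b^2 + 4ab}{3(a+b)}$, and assuming without loss of generality $a \geq b$, the identity $b^2 + ab - 2a^2 = (b-a)(b+2a) \leq 0$ shows $c^* \leq a = \max\{a,b\}$. Therefore $g$ is increasing on $[\max\{a,b\}, \infty)$, so it is enough to evaluate it at the left endpoint, where a direct computation gives $g\big|_{c = a} = a(a-b)^2 \geq 0$. This establishes $\mathrm{Ric}_m(i) \geq \frac{2}{c}$, with equality precisely when $a = b = c$, matching the degenerate case on $C_3$. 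I expect the algebraic reduction to $g \geq 0$ and the parabola/vertex estimate to be the only genuinely delicate part; the passage from the pointwise bound to the differential inequality for $M(t)$, and its integration, are then routine along the lines of Theorem \ref{MaximumDecreasingForC3}.
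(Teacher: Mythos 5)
Your proof is correct, and its overall strategy is exactly the one the paper intends: the paper's entire argument is the one-sentence remark that ``a similar proof to Theorem \ref{MaximumDecreasingForC3}'' yields $\frac{d}{dt}\max_{x}m(t,x)\leq -\frac{2}{\max_x m(t,x)}$ for a.e.\ $t$, which then integrates to $T\leq \frac14 \max_x m_0(x)^2$ just as you do; your envelope argument for $M(t)$ and the reduction to the pointwise bound $\mathrm{Ric}_m(i)\geq \frac{2}{m(i)}$ at a maximizing vertex are precisely this plan carried out in full. Where you genuinely depart from the paper is in how that pointwise bound is proved. The $C_3$ template the paper points to rationalizes the curvature as $\frac{P^2-Q}{P+\sqrt{Q}}$ and bounds the conjugate denominator $F(m)=P+\sqrt{Q}$ from above by a crude estimate; that works for $C_3$ only because the resulting bound $\frac{1}{2m(0)}$ is far from sharp (at constant weight the $C_3$ curvature equals $\frac{5}{2m}$). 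For $C_4$ the claimed bound $\frac{2}{m(i)}$ is attained exactly at constant weight, so any upper bound on $\sqrt{Q}$ that is lossy near $a=b=c$ destroys the constant, and a naive transfer of the $C_3$ computation does not go through --- the rationalization would need noticeably more care than the paper's ``similar proof'' suggests. Your method --- first checking $P-\frac{2}{c}\geq 0$, then squaring and reducing to the polynomial inequality $g=3(a+b)c^2-2(a^2+b^2+4ab)c+3ab(a+b)\geq 0$ on $c\geq\max\{a,b\}$ via the vertex location $c^*\leq\max\{a,b\}$ and the endpoint value $g\big|_{c=a}=a(a-b)^2$ --- preserves sharpness automatically, and each step checks out: the identity $\bigl(P-\frac{2}{c}\bigr)^2-Q=\frac{3}{ab}+\frac{3}{c^2}-\frac{2(a+b)}{abc}-\frac{4}{c(a+b)}$ does clear denominators to your $g$, and the equality case $a=b=c$ you identify is consistent with the flow fixing constant weights up to the uniform collapse. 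So your proposal both fills in the details the paper leaves implicit and does so by a route (exact algebraic reduction rather than conjugate estimation) that is arguably the more robust one for this sharp estimate.
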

\begin{remark}
 Same as the solution to the Bakry-\'{E}mery Ricci flow on $C_3$, $\min_{x \in V}m(t, x)$ of the solution to the Bakry-\'{E}mery Ricci flow does not have monotonically incresing properties, see Figure \ref{FlowC4}.
\end{remark}
\begin{figure}[H]
 \centering
  \includegraphics[width = 0.7\textwidth]{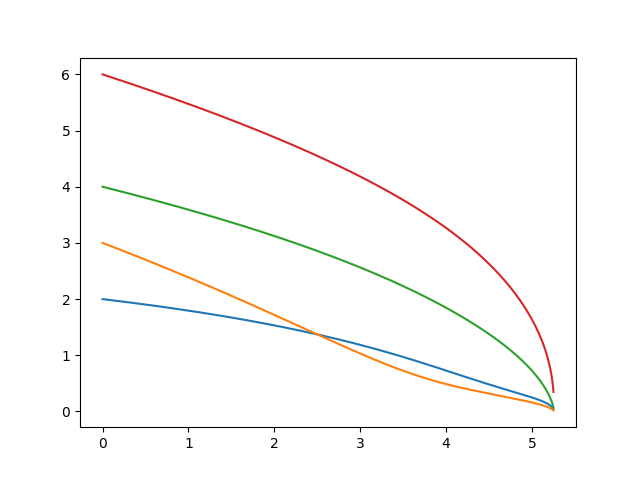}
 \caption{Bakry-\'{E}mery Ricci flow on $C_4$ with $m_0 = (2, 3, 4, 6)$. One can see that $\min_{x \in V}m(t, x)$ is decreasing. Moreover, for the vertex with the smallest initial weight, after a period of time, the weight is no longer the smallest.}
 \label{FlowC4}
\end{figure}

We now consider the Bakry-\'{E}mery Ricci flow on $C_k$ with $k \geq 5$. For $m: V \to \mathbb{R}_+$, the Bakry-\'{E}mery curvature is given by, for $i = 0, 1, 2, \cdots, k-1$,
\begin{align*}
 &\mathrel{\phantom{=}}\mathrm{Ric}_m(i) \\
 & = \frac12\left(\frac{1}{m(i-1)} +\frac{1}{m(i+1)}\right)-\frac12\sqrt{\left(\frac{1}{m(i-1)}-\frac{1}{m(i+1)}\right)^2+\frac{4}{m(i)^2}}.
\end{align*}
In fact, it is the minimal eigenvalue of a matrix $(\frac{1}{m(i-1)}, \frac{1}{m(i)}; \frac{1}{m(i)}, \frac{1}{m(i+1)})$.

\begin{proposition}\label{MinimumIncreasing}
 For $C_k$ with $k \geq 5$, $m_0: V \to \mathbb{R}_+$, let $m(t, \cdot)$ be the solution to the Bakry-\'{E}mery Ricci flow. Then $\min_{x \in V}m(t, x)$($\max_{x \in V}m(t, x)$ resp.) is non-decreasing(non-increasing resp.) in $t$.
\end{proposition}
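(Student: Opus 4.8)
The plan is to exploit the explicit description of $\mathrm{Ric}_m(i)$ as the minimal eigenvalue of the $2\times 2$ symmetric matrix
\[
 M_i = \begin{pmatrix} \frac{1}{m(i-1)} & \frac{1}{m(i)} \\[1mm] \frac{1}{m(i)} & \frac{1}{m(i+1)} \end{pmatrix},
\]
already recorded in the excerpt, and to combine it with a maximum-principle argument for the minimum and maximum of the finitely many smooth functions $t \mapsto m(t,x)$.

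First I would pin down the sign of the curvature at an extremal vertex. Since $M_i$ has positive trace $\frac{1}{m(i-1)}+\frac{1}{m(i+1)}>0$, its largest eigenvalue is always positive, so the sign of $\mathrm{Ric}_m(i)=\lambda_{\min}(M_i)$ agrees with the sign of $\det M_i = \frac{1}{m(i-1)m(i+1)}-\frac{1}{m(i)^2}$. Concretely,
\[
 \mathrm{Ric}_m(i)\leq 0 \iff m(i)^2 \leq m(i-1)m(i+1), \qquad \mathrm{Ric}_m(i)\geq 0 \iff m(i)^2 \geq m(i-1)m(i+1).
\]
Consequently, at any vertex $i$ where $m(\cdot)$ attains its global minimum over $V$ we have $m(i)\leq m(i-1)$ and $m(i)\leq m(i+1)$, whence $m(i)^2\leq m(i-1)m(i+1)$ and thus $\mathrm{Ric}_m(i)\leq 0$; dually, at a global maximum vertex $m(i)^2\geq m(i-1)m(i+1)$, so $\mathrm{Ric}_m(i)\geq 0$.

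Next I would pass to the time-dependent picture. Set $\phi(t)=\min_{x\in V}m(t,x)$ and $\psi(t)=\max_{x\in V}m(t,x)$. As a minimum, resp. maximum, of finitely many smooth functions, $\phi$ and $\psi$ are locally Lipschitz, hence differentiable for a.e.\ $t$; at such $t$ the envelope (Danskin) theorem gives $\phi'(t)=\min_{i\in I(t)}\partial_t m(t,i)$ and $\psi'(t)=\max_{i\in J(t)}\partial_t m(t,i)$, where $I(t)$ and $J(t)$ are the sets of vertices realizing the min, resp.\ max, at time $t$. For every $i\in I(t)$ the vertex is a global minimum, so by the previous step $\mathrm{Ric}_{m(t,\cdot)}(i)\leq 0$ and $\partial_t m(t,i)=-\mathrm{Ric}_{m(t,\cdot)}(i)\geq 0$; hence $\phi'(t)\geq 0$ a.e., and a Lipschitz function with a.e.\ nonnegative derivative is non-decreasing. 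Symmetrically, for every $i\in J(t)$ one has $\partial_t m(t,i)\leq 0$, so $\psi'(t)\leq 0$ a.e.\ and $\psi$ is non-increasing.

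The routine inputs (the eigenvalue/determinant bookkeeping and the Lipschitz regularity of $\phi,\psi$) are straightforward; the one point requiring care is the a.e.\ differentiation of $\min$ and $\max$ together with the identification of the derivative with a derivative along an active vertex. This is precisely the subtlety flagged by the ``for a.e.\ $t$'' phrasing in the $C_4$ theorem. I would make it rigorous either via Danskin's theorem or, more elementarily, by a one-sided Dini-derivative computation: for $h>0$ the minimum at time $t+h$ is realized, by continuity, among the vertices active at $t$, which pins the right derivative $D^+\phi(t)$ to $\partial_t m(t,i)$ for some $i\in I(t)$, and the differentiability at a.e.\ $t$ then upgrades this to the claimed formula.
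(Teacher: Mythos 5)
Your proposal is correct and follows essentially the same route as the paper: the paper's factored expression for $\mathrm{Ric}_m(i)$ has numerator $2\bigl(\frac{1}{m(i-1)m(i+1)}-\frac{1}{m(i)^2}\bigr)$ (the determinant of your matrix $M_i$) over a manifestly positive denominator, so its sign criterion at an extremal vertex is exactly your determinant/trace observation, and the paper then concludes via the same a.e.-differentiation of the min/max envelope. Your write-up is merely more explicit about the Danskin/Dini-derivative step and the ``Lipschitz with a.e.\ nonnegative derivative implies non-decreasing'' conclusion, which the paper asserts without detail.
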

\begin{proof}
 W.l.o.g., we prove the result for the minimum. Since the graph is finite, for a.e. $s \in [0, T)$, there exists $x_s = \mathrm{argmin}_{x \in V} m(s, x)$ such that 
 \[
  \frac{d}{dt}\Big|_{t=s}m(t, x_s) = \frac{d}{dt}\Big|_{t=s}\min\limits_{x \in V}m(t, x).
 \]
 Note that
 \begin{align*}
 &\mathrel{\phantom{=}}\mathrm{Ric}_m(i) \\
 &=2\left(\frac{1}{m(i-1)m(i+1)}-\frac{1}{m(i)^2}\right) \\
 &\mathrel{\phantom{=}} \times \left(\frac{1}{m(i-1)}+\frac{1}{m(i+1)}+\sqrt{\left(\frac{1}{m(i-1)}-\frac{1}{m(i+1)}\right)^2+\frac{4}{m(i)^2}}\right)^{-1}.
 \end{align*}
 We have 
 \[
  \mathrm{Ric}_{m(s, \cdot)}(x_s) \leq 0.
 \]
 Hence by the Ricci flow equation, $\frac{d}{dt}\Big|_{t = s}m(t, x_s) \geq 0$. This proves the result.
\end{proof}
\begin{proposition}
 For $C_k$ with $k \geq 5$, any $m_0: V \to \mathbb{R}_+$, there exists a solution $m \in C^{\infty}([0, \infty) \times V, \mathbb{R}_+)$ to the Bakry-\'{E}mery Ricci flow. Moreover, $m([0, \infty) \times V)$ is contained in a compact subset in $\mathbb{R}_+^V$.
\end{proposition}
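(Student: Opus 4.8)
The plan is to promote the local solution provided by Theorem~\ref{MainTheorem} to a global one by producing two-sided a priori bounds that trap the trajectory inside a compact subset of the open positive orthant $\mathbb{R}_+^V$, and then invoking the standard escape (continuation) lemma for ODEs. The boundedness assertion will then come for free from the same bounds.

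First I would examine the vector field. On $C_k$ with $k \geq 5$ the right-hand side $m \mapsto -\mathrm{Ric}_m(i)$ is given by the displayed formula, whose only possible non-smoothness is at a zero of the radicand
\[
 \left(\frac{1}{m(i-1)}-\frac{1}{m(i+1)}\right)^2+\frac{4}{m(i)^2}.
\]
Because the term $4/m(i)^2$ is strictly positive for every $m \in \mathbb{R}_+^V$, the radicand never vanishes, so here $m \mapsto \mathrm{Ric}_m$ is a genuine $C^\infty$ map on $\mathbb{R}_+^V$ (in contrast to a generic minimal eigenvalue, which is only Lipschitz). Consequently the unique maximal solution on $[0,T)$ from Theorem~\ref{MainTheorem} is in fact $C^\infty$ in $t$, by smooth dependence for ODEs with smooth vector fields.

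Next I would read off the a priori bounds from Proposition~\ref{MinimumIncreasing}: the map $t \mapsto \min_{x \in V} m(t,x)$ is non-decreasing and $t \mapsto \max_{x \in V} m(t,x)$ is non-increasing. Setting $a := \min_{x \in V} m_0(x) > 0$ and $b := \max_{x \in V} m_0(x)$, this yields
\[
 a \leq m(t,x) \leq b, \qquad \forall \, x \in V, \ t \in [0,T),
\]
so the trajectory $t \mapsto m(t,\cdot)$ stays in the box $K := [a,b]^V$. Since $a > 0$, the set $K$ is a compact subset of the \emph{open} orthant $\mathbb{R}_+^V$; the lower bound is exactly what keeps the solution away from the singular boundary $\{m(x)=0\}$, while the upper bound keeps it from escaping to infinity.

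Finally I would conclude. The solution remains in the compact set $K \subseteq \mathbb{R}_+^V$ on which the vector field is smooth and bounded; by the escape lemma a finite maximal time $T$ would force the trajectory to leave every compact subset of $\mathbb{R}_+^V$ as $t \to T$, contradicting $m([0,T)\times V) \subseteq K$. Hence $T = \infty$, giving a (unique, $C^\infty$) global solution, and $m([0,\infty)\times V) \subseteq K$ is the required compact subset. The only genuinely delicate point is this last one: one must confirm that the monotonicity bounds confine the orbit to a compact subset of the open orthant rather than merely bounding it in $\mathbb{R}^V$, since the escape lemma is applied on the domain $\mathbb{R}_+^V$ where the flow is defined. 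Everything else reduces to Proposition~\ref{MinimumIncreasing} and the strict positivity of the radicand.
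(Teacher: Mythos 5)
Your proof is correct and follows essentially the same route as the paper: both use Proposition~\ref{MinimumIncreasing} to trap the solution in the box $[\min_y m_0(y), \max_y m_0(y)]^V$, a compact subset of the open orthant, and then conclude $T=\infty$ by the continuation (escape) principle for ODEs. Your version merely spells out the escape lemma and the smoothness of the vector field more explicitly than the paper does.
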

\begin{proof}
 By Proposition \ref{MinimumIncreasing}, for any $t \in [0, T)$,
 \[
  \min_{y \in V}m_0(y) \leq m(t, x) \leq \max_{y \in V}m_0(y), \quad \forall \ x \in V.
 \]
 Hence, $m([0, \infty) \times V)$ is contained in a compact subset in $\mathbb{R}_+^V$. This yields that $T = \infty$.
\end{proof}
\begin{theorem}
 For $C_k$ with $k \geq 5$, the solution to the Bakry-\'{E}mery Ricci flow converges to a constant weight $c \in \mathbb{R}_+^V$, where $c>0$.
\end{theorem}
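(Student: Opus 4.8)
The plan is to promote the one–sided monotonicity of $\max_x m(t,x)$ and $\min_x m(t,x)$ (Proposition~\ref{MinimumIncreasing}) together with the global existence and precompactness of the orbit (the preceding proposition) into genuine convergence, by analysing the $\omega$-limit set of the trajectory and closing the argument with a discrete strong maximum principle. Throughout I use that the flow is the autonomous system $\dot m=F(m)$, $F(m)=(-\mathrm{Ric}_m(x))_x$, whose right-hand side is locally Lipschitz on $\mathbb R_+^V$ by the proposition on $\lambda_{\min}$.

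First I would record the coarse asymptotics. By the preceding proposition the solution exists on $[0,\infty)$ and its orbit $\mathcal O=\{m(t,\cdot):t\ge 0\}$ lies in the compact set $K=[\min_y m_0(y),\max_y m_0(y)]^V\subset\mathbb R_+^V$, which is bounded away from the boundary where some coordinate vanishes. By Proposition~\ref{MinimumIncreasing} the continuous functions $t\mapsto\max_x m(t,x)$ and $t\mapsto\min_x m(t,x)$ are non-increasing and non-decreasing respectively, and bounded, hence converge to limits $M_+\ge M_-$ with $0<M_-\le M_+<\infty$. Since $\mathcal O$ is precompact, its $\omega$-limit set $\omega$ is non-empty, compact, connected and invariant, with $\mathrm{dist}(m(t,\cdot),\omega)\to 0$. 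By continuity of $\max$ and $\min$, every $m^\ast\in\omega$ satisfies $\max_x m^\ast(x)=M_+$ and $\min_x m^\ast(x)=M_-$; by invariance the complete two-sided orbit $m^\ast(t,\cdot)$, $t\in\mathbb R$, through any such point stays in $\omega$, so $\max_x m^\ast(t,x)\equiv M_+$ for all $t\in\mathbb R$.

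The crux is the rigidity claim: every $m^\ast\in\omega$ is the constant weight equal to $M_+$ at every vertex. Here I use the factored curvature formula
\[
\mathrm{Ric}_m(i)=2\Big(\tfrac{1}{m(i-1)m(i+1)}-\tfrac{1}{m(i)^2}\Big)\Big(\tfrac{1}{m(i-1)}+\tfrac{1}{m(i+1)}+\sqrt{(\tfrac{1}{m(i-1)}-\tfrac{1}{m(i+1)})^2+\tfrac{4}{m(i)^2}}\Big)^{-1},
\]
whose bracketed second factor is strictly positive, so $\mathrm{Ric}_m(i)$ has the sign of $m(i)^2-m(i-1)m(i+1)$. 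Fix $m^\ast\in\omega$ and set $A=\{x:m^\ast(x)=M_+\}\neq\varnothing$. If $A\neq V$, then by connectedness of $C_k$ there is $i\in A$ with a neighbour $j\notin A$, so $m^\ast(i)=M_+$ while $m^\ast(i-1),m^\ast(i+1)\le M_+$ with at least one strictly smaller; hence $m^\ast(i)^2>m^\ast(i-1)m^\ast(i+1)$ and $\mathrm{Ric}_{m^\ast}(i)>0$, i.e.\ $\partial_t m^\ast(t,i)\big|_{t=0}=-\mathrm{Ric}_{m^\ast}(i)<0$. Running the orbit backward then gives $m^\ast(-\delta,i)>M_+$ for small $\delta>0$, contradicting $\max_x m^\ast(-\delta,x)=M_+$. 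Therefore $A=V$, so $m^\ast\equiv M_+$ and $\omega=\{c\}$ with $c\equiv M_+$. A precompact orbit with singleton $\omega$-limit set converges to that point, so $m(t,\cdot)\to c$ with $c\ge M_-\ge\min_y m_0(y)>0$, as desired.

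I expect the main obstacle to be precisely this rigidity step, and more specifically the justification that each $\omega$-limit point carries a complete backward orbit inside $\omega$ along which the maximum stays exactly $M_+$; it is this backward invariance that converts the instantaneous strict decay $\partial_t m^\ast(i)<0$ at a ``boundary'' maximum vertex into a contradiction. The computation that $\mathrm{Ric}_{m^\ast}(i)>0$ at such a vertex (and symmetrically $\mathrm{Ric}_{m^\ast}(i)<0$ at an interior minimum vertex) is exactly the discrete strong maximum principle underlying the argument, and one should verify the sign reading of the factored formula carefully so that the maximum principle applies at every boundary extremum.
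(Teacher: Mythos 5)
Your proof is correct, and it takes a genuinely different route from the paper's. The paper argues by contradiction on the two limits $a=\lim_{t\to\infty}\min_x m(t,x)$ and $b=\lim_{t\to\infty}\max_x m(t,x)$: summing the elementary bound $\mathrm{Ric}_m(i)\le \frac12\left(\frac{1}{m(i-1)}+\frac{1}{m(i+1)}-\frac{2}{m(i)}\right)$ over the cycle shows that the total mass $\sum_i m(t,i)$ is non-decreasing; then, assuming $b>a$, it establishes a quantitative claim --- using the inequality $\sqrt{x^2+1}\ge(1-\varepsilon)x+\sqrt{\varepsilon}$, an even/odd parity decomposition of $C_k$, and the fact that $\frac{d}{dt}\max_x m(t,x)\to 0$ for a.e.\ large times --- that the total curvature $\sum_i\mathrm{Ric}_{m(t,\cdot)}(i)$ is eventually $\le-\varepsilon_0<0$, whence $\sum_i m(t,i)\to\infty$, contradicting the uniform bounds from Proposition \ref{MinimumIncreasing}. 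You instead run a LaSalle-type invariance argument: precompactness of the orbit and monotonicity of the extrema make $\max_x m$ constant ($\equiv M_+$) on the $\omega$-limit set; full two-sided invariance of $\omega$ propagates this constraint along complete orbits; and the factored curvature formula --- whose sign is indeed that of $m(i)^2-m(i-1)m(i+1)$, as your reading says --- serves as a discrete strong maximum principle forcing every $\omega$-limit point to be the constant $M_+$, the backward flow converting the strict decay at a boundary maximum vertex into a contradiction. The backward invariance you flag as the main obstacle is standard: for a locally Lipschitz autonomous system, the $\omega$-limit set of an orbit contained in a compact subset of the phase domain is nonempty, compact, connected and invariant, with a complete orbit through each of its points remaining in the set (compactness of $\omega\subset\mathbb{R}_+^V$ then gives existence for all backward times). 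As for what each approach buys: yours is more conceptual, identifies the limit explicitly as $\lim_t\max_x m(t,x)$, and sidesteps the paper's delicate quantitative estimates (including its somewhat informal handling of the a.e.\ derivative of the maximum); the paper's argument, while more involved, stays entirely at the level of the flow equation and integration, invoking nothing from dynamical systems beyond the Picard theorem.
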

\begin{proof}
By Proposition \ref{MinimumIncreasing}, we may assume that $M(t):= \max_{x \in V}m(t, x) \to b$ and $m(t):= \min_{x \in V}m(t, x) \to a$ as $t \to \infty$. We only need to prove that $a = b$. Suppose that it is not true, i.e., $b > a$. We show that this yields a contradiction. 

One easily checks that 
\begin{align*}
 \sum_{i \in V} \partial_t m(t, i) &= -\sum_{i \in V} \mathrm{Ric}_{m(t, \cdot)}(i) \\
 & \geq \sum_{i \in V}\frac12\left(-\frac{1}{m(i-1)}-\frac{1}{m(i+1)}+\frac{2}{m(i)}\right) \geq 0.
\end{align*}
On the other hand, since $M(t)$ is a monotonically increasing Lipschitz function, we know that for a.e. $s \in [0, \infty)$, $\frac{d}{dt}\Big|_{t = s} M(t)$ exists and tends to $0$. For large $t$, we claim that there exists $\varepsilon_0 > 0$ such that $Q(t):= \sum_{i \in V} \mathrm{Ric}_{m(t, \cdot)} \leq -\varepsilon_0$. 

Suppose that is not true, i.e., for any $\varepsilon > 0$ and $T_1 >0$, there exist $t > T_1$ such that $Q(t) \geq -\varepsilon$. For large time $t$, we may assume that $\max_{x \in V}m(t, x) \leq \frac{3b}{2}$ and $\min_{x \in V}m(t, x) \geq \frac{a}{2}$. Note that for any $0 \leq \varepsilon \leq 1$ and $x \geq 0$, $\sqrt{x^2 + 1} \geq (1-\varepsilon)x + \sqrt{\varepsilon}$. Thus, we have
\begin{align*}
  -2\varepsilon &\leq 2Q(t) \\
  &\leq \sum_{i \in V}(\frac{1}{m(t, i-1)}+\frac{1}{m(t, i+1)} - (1-\varepsilon)\frac{2}{m(t, i)}) \\
  &\mathrel{\phantom{=}} -\sum_{i \in V} \sqrt{\varepsilon}\Big|\frac{1}{m(t, i-1)}-\frac{1}{m(t, i+1)}\Big| \\
  &= \sum_{i \in V}\left(\frac{2\varepsilon}{m(t, i)} - \sqrt{\varepsilon}\Big|\frac{1}{m(t, i-1)}-\frac{1}{m(t, i+1)}\Big|\right) \\
  &\leq \frac{4k\varepsilon}{a} - \sum_{i \in V}\sqrt{\varepsilon}\Big|\frac{1}{m(t, i-1)}-\frac{1}{m(t, i+1)}\Big|. 
\end{align*}
It follows that for all $i \in V$,
\[
 \Big|\frac{1}{m(t, i-1)}-\frac{1}{m(t, i+1)}\Big| \leq C_0\sqrt{\varepsilon}, 
\]
where $C_0 = \frac{4k}{a}+2$.
Now, if $k$ is odd, then for any $i, j \in V$, we have 
\[
 \Big|\frac{1}{m(t, i)} - \frac{1}{m(t, j)}\Big| \leq C_0k\sqrt{\varepsilon},
\]
and thus,
\[
 \Big|\frac{1}{M(t)} - \frac{1}{m(t)}\Big| \leq C_0k\sqrt{\varepsilon},
\]
which is impossible if we choose $\varepsilon$ small enough. If $k$ is even, we have
\[
 \Big|\frac{1}{m(t, i)} - \frac{1}{m(t, j)}\Big| \leq C_0k\sqrt{\varepsilon}, \quad \forall \ i, j \in \{0, 2, \cdots, k-2\},
\]
and
\[
 \Big|\frac{1}{m(t, i)} - \frac{1}{m(t, j)}\Big| \leq C_0k\sqrt{\varepsilon}, \quad \forall \ i, j \in \{1, 3, \cdots, k-1\}.
\]
Choose $\varepsilon < \left(\frac{b-a}{2C_0kab}\right)^2$, then
\[
 \Big|\frac{1}{m(t, i)} - \frac{1}{M(t)}\Big| \leq \frac12\left(\frac{1}{a}-\frac{1}{b}\right), \quad \forall \ i \in \{0, 2, \cdots, k-2\},
\]
and
\[
 \Big|\frac{1}{m(t, j)} - \frac{1}{m(t)}\Big| \leq \frac12\left(\frac{1}{a} - \frac{1}{b}\right), \quad \forall \ j \in \{1, 3, \cdots, k-1\}.
\]
It follows that for $x_t \in \mathrm{argmax}_{i \in V}m(t, i)$,
\begin{align*}
 \partial_tM(t) &\geq -\frac12\left(\frac{1}{m(t, x_t-1)}+ \frac{1}{m(t, x_t+1)}\right) + \frac{1}{m(t, x_t)} \\
 &\geq -\frac{1}{m(t)} - \frac12\left(\frac{1}{a}-\frac{1}{b}\right) + \frac{1}{m(t, x_t)} \\
 &\to \frac12\left(\frac{1}{a}-\frac{1}{b}\right),
\end{align*}
which contradicts the fact $\frac{d}{dt} M(t) \to 0$. Hence, the claim holds.

Due to the claim, we have for large $t$,
\[
 \sum_{i \in V} \partial_tm(t, i) \geq \varepsilon_0.
\]
It follows that $\sum_{i \in V}m(t, i) \to +\infty$ as $t \to \infty$. On the other hand, for large $t$,
\[
 \sum_{i \in V}m(t, i) \leq \frac{3k}{2}b.
\]
This is a contradiction, and implies that $b = a$, as desired.
\end{proof}
\begin{figure}[H]
 \centering
  \includegraphics[width = 0.7\textwidth]{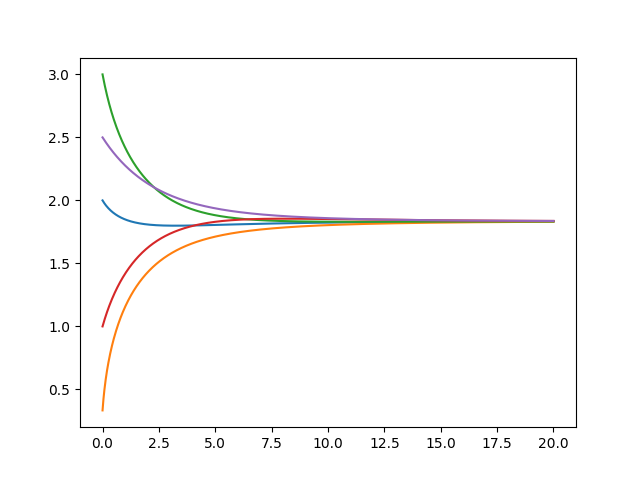}
 \caption{Bakry-\'{E}mery Ricci flow on $C_5$ with $m_0 = (2, \frac13, 3, 1, \frac52)$.}
\end{figure}

Finally, we briefly introduce the normalized Bakry-\'{E}mery on $C_3$ and $C_4$. For the normalized Ricci flow on $C_3$, if we solve the equation $\mathrm{Ric}_m(0) = \mathrm{Ric}_m(1) = \mathrm{Ric}_m(2)$ under the constraint $\sum_i \frac{1}{m(i)}=1$, one gets that $m(0) : m(1) : m(2) = 5:5:2$ or $m(0):m(1):m(2) = 3:3:4$ or $m=\text{const}$. Draw the phase diagram of the $m(t, 0)$ and $m(t, 1)$ under the constraint $\sum_i \frac{1}{m(t, i)} = 1$, as shown in the Figure \ref{PhaseDiagram}. One can see that the point $(3, 3)$ is unstable, which implies that the normalized Ricci flow on $C_3$ will not converge to a constant if it converge.
\begin{figure}[H]
 \centering
  \includegraphics[width = 0.5\textwidth]{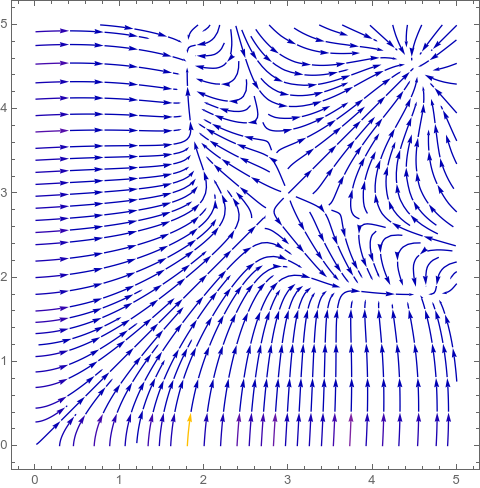}
 \caption{}
 \label{PhaseDiagram}
\end{figure}

For the normalized Bakry-\'{E}mery Ricci flow on $C_4$, if we solve the equation $\mathrm{Ric}_m(0) = \cdots \mathrm{Ric}_m(3)$, one gets that $m = \text{const}$, which implies that the normalized Ricci flow will converge to a constant if it converges. The numerical result suggests that it converges for any initial data.
\begin{figure}[H]
\centering
\includegraphics[width=0.5\textwidth]{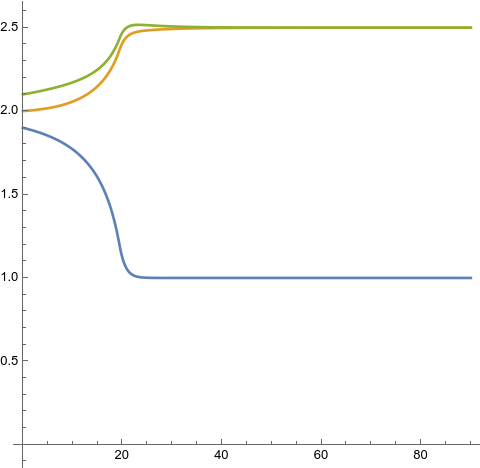}
\caption{Normalized Ricci flow on $C_3$ with $m_0 = (1.9, 2, 2.1)$}
\end{figure}
\begin{figure}[H]
\centering
\includegraphics[width=0.5\textwidth]{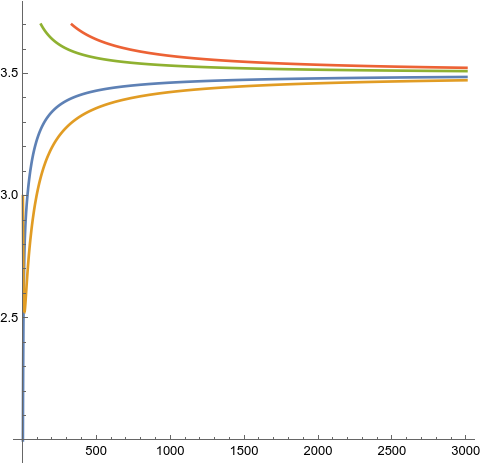}
\caption{Normalized Ricci flow on $C_4$ with $m_0 = (2, 3, 4, 5)$}
\end{figure}

\section*{Acknowledgments}
The authors would like to thank Florentin M\"{u}nch and Wanjun Ai for their helpful advice. B. Hua is supported by NSFC, no. 12371056, and by Shanghai Science and Technology Program[Project No. 22JC1400100]. Y. Lin is supported by NSFC, no. 12071245.

\bibliographystyle{alpha}
\bibliography{BakryEmeryRicciFlow}

\end{document}